\newcommand{\SO}{{\mathcal{O}}}
\newcommand{\CC}{\mathbb{C}}
\newcommand{\Spec}{\operatorname{Spec}}
\newcommand{\Hom}{\operatorname{Hom}}
\newcommand{\Pic}{\operatorname{Pic}}
\newcommand{\Jac}{\operatorname{Jac}}
\newcommand{\Sym}{\operatorname{Sym}}
\newcommand{\too}{\longrightarrow}
\newcommand{\rk}{\operatorname{rk}}
\newcommand{\pdeg}{\text{\rm par-deg}\,}
\newcommand{\pmu}{\text{\rm par-}\mu \,}
\newcommand{\op}{\operatorname}
\newcommand{\PM}{\mathcal {PM}^{d}_{\alpha}} 
\newcommand{\PF}{\mathcal {PM}^{\Lambda}} 
\newcommand{\PHM}{\mathcal {PM}^{d}_{_{\text{Higgs}}}} 
\newcommand{\PHF}{\mathcal {PM}^{\Lambda}_{_{\text{Higgs}}}} 
\newtheorem{theorem}{Theorem}[section]
\newtheorem{lemma}[theorem]{Lemma}
\newtheorem{proposition}[theorem]{Proposition}
\newtheorem{remark}[theorem]{Remark}
\newcommand{\nc}{\newcommand}
\nc{\on}{\operatorname}
\nc\et{\on{Sch}_{et}}
\nc{\C}{{\mathcal Q}}
\nc\Tors{\on{Tors}}
\nc{\A}{{\mathcal A}}
\renewcommand{\P}{{\mathcal P}}
\nc{\Triv}{\op{Triv}}
\numberwithin{equation}{section}
\begin{document}

\title[SYZ duality for parabolic Higgs moduli]{SYZ duality for
parabolic Higgs moduli spaces}

\author[I. Biswas]{Indranil Biswas}

\address{School of Mathematics, Tata Institute of Fundamental
Research, Homi Bhabha Road, Bombay 400005, India}

\email{indranil@math.tifr.res.in}

\author[A. Dey]{A. Dey}

\address{Department of Mathematics, Indian Institute of Technology, Madras,
Chennai 600036, India}

\email{arijitdey@gmail.com}

\subjclass[2000]{14D20, 14D21}

\keywords{Parabolic bundle, Higgs field, SYZ duality, gerbe}

\date{}

\begin{abstract}
We prove the SYZ (Strominger-Yau-Zaslow) duality for the moduli 
space of full flag parabolic Higgs bundles over a compact 
Riemann surface. In \cite{HT2}, the SYZ duality was proved for 
moduli spaces of Higgs vector bundles over a compact Riemann 
surface.
\end{abstract}

\maketitle

\section{Introduction}

\subsection{ Mirror symmetry and SYZ duality}
Mirror symmetry was discovered in the late 1980's by physicists studying 
superconformal field theories. 
Let $X$ be an $n$ dimensional complex Calabi-Yau manifold with a 
Ricci-flat K\"ahler form $\omega$ and 
a nowhere vanishing holomorphic $n$-form $\Omega$ on $X$. A submanifold 
$Z \,\subset\, X$ of real dimension $n$ is called Lagrangian if 
$\omega\vert_Z \, =\,0$; further a Lagrangian submanifold is said to be 
special if $({\rm Im}\, \Omega)\vert_Z \,=\, 0$. After simplifying a great deal, 
mirror symmetry is an one-to-one duality (in an appropriate sense) between two 
class of objects:
\begin{enumerate}
\item{} Pairs of the form $(Z\, ,L)$, where $Z$ is a holomorphic submanifold 
of $X$ and $L$ is a holomorphic line subbundle on $Z$
(such a pair is called a holomorphic $D$-brane).

\item{} Special Lagrangian $D$-branes, which is a pair 
$(\widehat{Z}\, ,\widehat{L})$ where $\widehat{Z}$ is a special 
Lagrangian 
submanifold of a certain Calabi-Yau manifold $\widehat X$ (mirror 
partner), and $\widehat L$ is a flat $U(1)$ line bundle on 
$\widehat{Z}$. 
\end{enumerate} 

Since any point $x\, \in\, X$ is a submanifold, it should correspond to a 
pair $(\widehat{Z}\, ,\Lambda)$, 
where $\widehat{Z}$ is a special Lagrangian submanifold of a 
fixed Calabi-Yau manifold $\widehat{X}$ and $\Lambda$ is a flat
$U(1)$-line bundle on $\widehat{Z}$. By a theorem of McLean, the 
deformation space 
for a special Lagrangian manifold $\widehat{Z}$ is unobstructed 
and is parametrized by 
$H^1(\widehat{Z}\, ,\mathbb R)$, hence 
$\dim H^1(\widehat{Z},\,\mathbb R) \,=\,\text{dim}_{\mathbb C}(X) \,=\,n$. 
The moduli space of flat $U(1)$ line bundles is given by the torus 
$H^1(\widehat{Z},\,\mathbb R) / H^1(\widehat{Z},\,\mathbb Z)$. This 
gives a hint that a moduli of special Lagrangian submanifolds of a fixed 
Calabi-Yau manifold should have a $n$-torus fibration over an affine base of 
real dimension $n$.

Motivated by this Strominger-Yau-Zaslow made a conjecture \cite{SYZ}.

\noindent
{\bf SYZ Conjecture:}\, If $X$ and $\widehat X$ are mirror pair of 
Calabi-Yau $n$-folds, then there exist fibrations $f:\, X \, 
\longrightarrow \, B$ and $\widehat{f}:\, \widehat{X} \, \longrightarrow \, 
B$ whose fibers are special Lagrangian such that the general fiber is an 
$n$-torus. Furthermore, these fibrations are dual, in the sense that 
canonically $X_b\,=\,H^1(\widehat{X_b},\, S^1)$ and $\widehat{X_b}\, =\, 
H^1(X_b,\,S^1)$, whenever the fibers $X_b$ and $\widehat{X_b}$ are 
non-singular tori.

\subsection{Reformulation of the SYZ conjecture in terms of unitary
gerbes} 

Hitchin introduced the notion of a flat unitary gerbe (known as 
$\mathcal B$-fields to physicists) and reformulated the SYZ conjecture in 
terms of this $\mathcal B$-fields \cite{Hi}. To make sense one needs a 
further assumption that the special 
Lagrangian fibers are linearly equivalent for both $X$ and 
$\widehat X$. Then by \cite[Theorem 3.3]{Hi}, the restriction map for the 
second cohomology $H^2(X,\,\mathbb R) \longrightarrow H^2(X_b,\,\mathbb R)$ 
is zero. This means that the restriction map $H^2(X,\,S^1)\,\longrightarrow
\,H^2(X_b,\,S^1)$ is trivial. So the flat unitary gerbe $\mathcal B$ has 
trivial holonomy on each torus fiber (hence trivial). Therefore, one should 
work with pairs $(X_b,\, T)$, where $X_b$ is a special Lagrangian submanifold 
and $T$ is a flat trivialization of the gerbe $\mathcal B$ on $Z$.

The modified mirror conjecture as proposed by Hitchin, \cite{Hi}, is 
the following:

\noindent
{\bf Conjecture:}\, The mirror of a Calabi-Yau manifold $X$ with a 
$\mathcal B$-field is the moduli space of pairs $(Z\, ,T)$, where $Z$ is a 
special 
Lagrangian submanifold of $X$ and $T$ is a flat trivialization of the gerbe 
$\mathcal B$ on $Z$.

Two Calabi-Yau $n$-orbifolds $M$ and $\widehat M$,
equipped with flat unitary gerbes $B$ and $\widehat B$
respectively, 
are said to be {\em SYZ mirror partners\/} if there is an
orbifold $N$ of real dimension $n$ 
and there are smooth surjections
$$
\mu\,:\,M \,\longrightarrow \,N\, ,~\, ~\, \widehat{\mu}\,:\,
\widehat{M}\, \longrightarrow\, N
$$
such that for every $x \,\in\, N$ which is a regular value of $\mu$ and 
$\widehat \mu$, the fibers $L_x := \mu^{-1}(x) \subset M$ 
and ${\widehat L}_x := \widehat{\mu}^{-1}(x) \,\subset\, \widehat M$ 
are special Lagrangian tori which are dual to each other in the
sense that there are smooth identifications
$$L_x \,=\, \text{Triv}^{U{1}} (\hat L_x, \hat B)~\, \text{ and }\, ~ 
{\widehat L}_x \,= \,\text{Triv}^{U{1}} (L_x, B)
$$
that depend smoothly on $x$.

\subsection{The result of Hausel and Thaddeus} 

The moduli spaces of Higgs bundles admit natural dual pairs of
hyper-K\"ahler integrable systems \cite{Hi1}, \cite{Hi2}. The 
hyper-K\"ahler metric 
and the collection of Poisson-commuting functions determining the integrable
system produce a family of special Lagrangian tori on the moduli spaces, which 
is a key requirement of SYZ conjecture. Moreover, the families of tori on the 
$\text{SL}(r,{\mathbb C})$ and $\text{PGL}(r,{\mathbb C})$ moduli spaces are 
dual in the appropriate 
sense, which is the other requirement of SYZ conjecture. 

This work of Hausel and Thaddeus was extended to principal $G_2$ in \cite{Hi4}.
In \cite{DP}, this was extended to all semisimple groups.
(See related works \cite{FW}, \cite{GW} and \cite{Wi}.)

In \cite{HT1}, Hausel-Thaddeus made an announcement that moduli 
spaces of $\text{SL}(r,{\mathbb C})$ and $\text{PGL}(r,{\mathbb C})$ parabolic 
Higgs bundles are 
mirror partner to each other (in the sense of SYZ) and their stingy E 
polynomials are same. In \cite{HT2}, they gave a proof of this conjecture 
in non-parabolic case. 

Our aim here is to address the case of parabolic vector bundles with complete
quasi-parabolic flags. We follow the proof of Hausel-Thaddeus; the key ingredient 
here is the identification of the parabolic Hitchin fiber as the Prym variety of 
a certain spectral cover (which was done in \cite{BM}, \cite{GL}). 

It should be clarified that the Higgs fields that we consider have 
nilpotent residue. The corresponding moduli space forms a symplectic leaf
of the moduli space of Higgs bundles for which the residue of the Higgs field
satisfies the weaker condition that it is only flag preserving.

\medskip
\noindent
\textbf{Acknowledgements.}\, We thank the referee for pointing out references. 
The second author would like to thank J. Martens for an useful discussion. He 
also thanks TIFR for hospitality while some part of this work was done. Both 
authors thank The Institute of Mathematical Sciences at Chennai for hospitality.

\section{Preliminaries}\label{sec2}

\subsection{Parabolic Higgs bundles} 

Let $X$ be an irreducible smooth projective curve over $\CC$ of 
genus $g$, with $g\,\ge\, 2$. Let
$D\, \subset\, X$ be a nonempty finite subset of $n$ points.

A \textit{quasi-parabolic structure}, over $D$, on a holomorphic 
vector bundle $E\, \longrightarrow\, X$ is a filtration
$$
E_x\,=\, E_{x,0}\,\supset\, E_{x,1}\,\supset\,\cdots\,\supset\, 
E_{x,r_x}\,\supset\, E_{x,r_x+1}\,=\, \{0\}
$$
for each $x\,\in\, D$. A \emph{parabolic structure} on $E$ is
a quasi-parabolic structure as above together with rational numbers
$$
0\,\le\,\alpha_{x,0}\,<\, \alpha_{x,1}\,<\, \cdots\,<\, 
\alpha_{x,r_x}\,<\, 1 \, ,
$$
which are called \textit{parabolic weights}.
A \emph{parabolic vector bundle} over $X$ of rank $r$ is a 
holomorphic vector bundle of rank $r$ on $X$ equipped with a 
quasi-parabolic structure over $D$ together 
with parabolic weights. The system of parabolic weights
$\{(\alpha_{x,0}\, ,\cdots\, , \alpha_{x,r_x})\}_{x\in D}$ will be denoted 
by $\alpha$.

For a parabolic vector bundle $E_*\,=\,\, 
(E,\{E_{x,i}\},\alpha_{*})$, the \emph{parabolic degree} is 
defined to be
$$
\pdeg(E_*)\, =\, \deg(E)+ \sum_{x\in D}\sum_{i=0}^{r_x} 
\alpha_{x,i} \, ,
$$
and the \emph{parabolic slope} is 
defined to be $\pmu(E_*)\,:=\,\pdeg(E_*)/rk(E)$. Any holomorphic 
subbundle $F$ of $E$ has a parabolic structure induced by the
parabolic structure on $E$; the resulting parabolic vector 
bundle will be denoted by $F_*$. A parabolic bundle is said to 
be \textit{stable} (respectively, \textit{semistable}) if for all 
holomorphic subbundles $F\,\subset\, E$ with $0\, <\, \rk(F)\, 
<\, rk(E)$,
\begin{equation}\label{mu}
\pmu(F)\,<\,\pmu(E)~\,~{\rm (respectively,~}\, \pmu(F)\,\leq\,\pmu(E)
{\rm )} \, .
\end{equation}

The moduli space of semistable parabolic vector
bundles of rank $r$ and degree $d$ with fixed parabolic data was 
constructed by Mehta and Seshadri, \cite{MS}, using Mumford's 
Geometric Invariant Theory. This moduli space, which we will 
denote by $\PM$,
is smooth for a generic choice of weights $\alpha$. We recall 
that given rank and degree, a system of parabolic weights 
$\alpha$ with multiplication is called \textit{generic} if the
semistability condition implies the stability condition. 

Consider the determinant morphism
\begin{equation}\label{det}
\det\,:\, \PM \, \longrightarrow \,\Jac^d(X)
\end{equation}
that sends a parabolic vector bundle $E_*$ to
$\bigwedge^{\rm top} E$. Choose $\Lambda 
\,\in\, 
\Jac^d(X)$, and define
\begin{equation}\label{pf}
\PF\,:=\, {\rm det}^{-1}(\Lambda)\, .
\end{equation}
So $\PF$ is a moduli space of twisted $\text{SL}(r,{\mathbb C})$--bundles
with parabolic structure (see \cite[Section 2]{BLS} for twisted 
$\text{SL}(r,{\mathbb C})$--bundles). For any other line bundle 
$\Lambda_1\, \in\, \text{Jac}^{d}(X)$, the morphism
$$
{\rm det}^{-1}(\Lambda)\, \longrightarrow\, 
{\rm det}^{-1}(\Lambda_1)
$$
that sends any parabolic vector bundle $E_*$ to $E_*\otimes
\zeta$, where $\zeta$ is a fixed $r$-th root of 
$\Lambda_1 \otimes \Lambda^{-1}$, is an isomorphism.
Thus the isomorphism class of the moduli space $\PF$ does not 
depend on the choice of $\Lambda \,\in\, \Jac^d(X)$.

The abelian variety $\Pic^0(X) \,=\, \Jac^0(X)$ acts on $\PM$
via
\[
(L,E_*) \,\longmapsto \, L {\otimes} E_*\, .
\] 
The quotient
$$
\widetilde{\PM}\,:=\, \PM /\Pic^0(X)\, ,$$
which exist as a projective variety by 
\cite{S}, is the component of the moduli space of parabolic 
$\text{PGL}(r,{\mathbb C})$--bundles corresponding to degree $d$ (the
connected components of the moduli space of parabolic
$\text{PGL}(r,{\mathbb C})$--bundles are irreducible).

Let
\begin{equation}\label{e1}
\Gamma \,:=\, \Pic^0(X)[r]
\end{equation}
be the group of $r$-torsion points of the Jacobian; it is
isomorphic to $({\mathbb Z}/r{\mathbb Z})^{2g}$, in particular, 
its order is $r^{2g}$. The action of
$\Gamma$ on $\PM$ preserves the subvariety
${\PF}$ defined in \eqref{pf}. Let
$$
\widetilde{\PF}\,:=\, {\PF}/\Gamma
$$
be the quotient; it is a projective variety by \cite{S}.

Let $K$ be the holomorphic cotangent bundle of $X$. The line bundle
$K\otimes \SO_{X}(D)$ will be denoted by $K(D)$. A \emph{parabolic
Higgs bundle} is a pair $(E_*\, ,\Phi)$, where $E_*$
is a parabolic vector bundle and
$$
\Phi\,:\, E \,\too\, E\otimes K(D)
$$
is a homomorphism
which is {\it strongly parabolic}, meaning 
\begin{eqnarray*}
\Phi(E_{x,i})\,\subset\, E_{x,i+1}\otimes K(D)_{x}
\end{eqnarray*}
for each point $x \,\in\, D$ and $i\, \in\, [0\, ,r_x]$. 

A parabolic Higgs bundle $(E_*\, ,\Phi)$ is called \emph{(semi)-stable} if
the slope condition in \eqref{mu} holds whenever $\Phi$ preserves
$F$. Let $\PHM$ denote the moduli space of semistable parabolic Higgs
bundles of rank $r$ and degree $d$ with the given parabolic data.
Let $$\PHF\, \subset \, \PHM$$ be the subvariety consisting of
all $(E_*\, \Phi)$ such that $\det (E)\, =\, \Lambda$ and
$\text{trace}(\Phi)\,=\, 0$.

For $E_* \, \in \PM$, 
\[
T_{E_*} \PM \,=\, H^1(X,\, \text{End} (E_*)) 
\]
\cite{Y}, \cite{Y1}, where $\text{End} (E_*)$ is the sheaf of 
endomorphisms of the underlying vector bundle $E$ preserving the
quasi-parabolic filtrations. Applying the parabolic 
analog of Serre duality, \cite[Section 3]{Y}, 
\[
T_{E_*} \PM \,=\, H^0(X,\, \text{SEnd} (E_*) \otimes K(D))^{*}
\, ,
\]
where $\text{SEnd}(E_*)\, \subset\, \text{End} (E_*)$ is the 
strongly parabolic endomorphisms. Hence the 
total space $T_{E_*}^{*}\PM$ of the cotangent bundle maps to
$\PHM$. This map is an open embedding.

The group $\Gamma$ acts on $\PHF$ via tensor product (the Higgs 
field does not change). The quotient $\PHF /\Gamma$ will be 
denoted by $\widetilde{\PHF}$. 

\subsection{Parabolic Hitchin system}\label{se2.2}

In this subsection we recall the Hitchin map and the spectral 
curve for a parabolic Higgs bundle (see \cite{BM}, \cite{GL}, 
\cite{LM} for details). 

For notational convenience, the line bundle $K^{\otimes a}
\otimes {\mathcal O}_X(bD)$ will be denoted by $K^aD^b$.

The parabolic Hitchin space is defined as
$$
\mathcal H \,= \, H^{0}(K^2D) \oplus \cdots 
\oplus H^0(K^rD^{r-1})\, .
$$
The characteristic polynomial or trace map of a Higgs field
defines the parabolic Hitchin map 
\begin{equation}\label{hitchin}
h\,:\, \PHF\,\longrightarrow \,\mathcal H
\end{equation}
It is known that this morphism $h$ is proper (see \cite{Y1}).

Let $Z\, :=\,\underline{\Spec}\Sym^\bullet (K^{-1}\otimes 
\SO_{X}(D)^{-1})$ be the total 
space of the line bundle $K(D)$ which is a quasi-projective surface.
Let
\begin{equation}\label{prp}
p:Z\, \too \,X
\end{equation}
be the natural projection. For $s \in \mathcal H$, there 
exists an algebraic curve, denoted as $X_s$, in $Z$ which is 
known as the {\it spectral curve}. We will very 
briefly recall it (for details see \cite{BM}, \cite{GL}). 

For any $(s_1,\, \cdots, \, s_{r-1}) \, \in \, \mathcal H$, 
consider the map $S$ from $Z$ to the total space of the line 
bundle $K^rD^r$ given by 
$$
z\,\longmapsto\, z^{\otimes r} + z^{\otimes (r-2)} \otimes 
s_1(p(z))
\, + \, \cdots + \, s_{r-1}(p(z)) \, \in \, (K^rD^r)_{p(z)}
$$
where $z\, \in\, Z$, and $p$ is the projection in \eqref{prp}. 
The inverse image $S^{-1}(0_X)\, \subset\, Z$, where $O_X\, 
\subset\,K^rD^r$ is the zero section, is the parabolic spectral 
curve associated to $s \,:=\,(s_1,\, \cdots,\,s_{r-1})$. This
parabolic spectral curve will be denoted by $X_s$. The 
restriction of $p$ to $X_s$ will again be denoted as $p$.

Henceforth, we assume that for each point $x\, \in\, D$, the
quasi-parabolic flag is complete. In other words, $r_x\,=\, r-1$.

There is a Zariski open dense subset ${\mathcal U} \,\subset \, 
\mathcal H$ such that for any $s \,\in\, \mathcal U$ the spectral 
curve $X_s$ is smooth and connected \cite[Lemma 3.1]{GL}; the 
assumption that the quasi-parabolic flags are complete is needed 
for this. The fiber over $s$ is isomorphic to
$$
\text{Prym}^{d'}(Y_{s}) \, = \, \{L \,\in \,{\rm Pic}^{d'}(Y_{s}) 
\, \mid \, 
\text{det}(\pi_*(L))\,=\,\xi \}\, ,
$$
where $d'\,=\,d + r(r-1)(n+2g-2)/2$ \cite[Lemma 3.2]{GL}. This is a 
$\Gamma$-invariant closed subvariety of $\PHF$.

Note that tensoring by a line bundle does not change the 
characteristic polynomial, hence the Hitchin map $h$ in 
\eqref{hitchin} descends 
down to 
\begin{equation}\label{hitchin1}
\widetilde{h}\,:\,\widetilde{\PHF} \, \longrightarrow 
\,\mathcal H\, .
\end{equation}
So $h$ is the composition of $\widetilde{h}$ with the quotient map
$\PHF\, \longrightarrow\,\widetilde{\PHF}$.

We will describe the fibers of the Hitchin maps in \eqref{hitchin} and 
\eqref{hitchin1}.
For any $s\,\in\,\mathcal U$, 
\begin{enumerate}
\item{} $P^{d'} \,:=\,h^{-1}(s) \, =\, \text{Prym}^{d'}(X_s) 
\,=\,\text{Nm}^{-1}({\mathcal O}_X(d'x))$, where 
$$Nm\,:\,\text{Pic}^d(X_s)\,\longrightarrow\, \text{Pic}^{d}(X)$$ 
is the norm map defined by $\mathcal O_{X_s}(\sum_i d_ix_i) \, 
\longmapsto \,\mathcal O_{X}(\sum_i d_i\pi(x_i))$.

\item{} $\widehat{P}^{d'}\,:=\,\widehat{h}^{-1}(s) \,=\, 
\text{Prym}^{d'}(X_s)/ \Gamma$ (see \eqref{e1} for $\Gamma$).
\end{enumerate}

The fiber $P^{d'}$ is a torsor for $$P^0\,:=\, 
\text{Nm}^{-1}({\mathcal O}_X)\, ,$$ and $\widehat{P}^{d'}$ is a 
torsor for $\widehat{P}^0\,=\, \text{Prym}^{0}(X_s)/ \Gamma$. Hence
$h\vert_{\mathcal U}$ and $\tilde{h}\vert_{\mathcal U}$ can be thought
of as $P^0$ and $\widehat{P}^0$ torsors respectively over $\mathcal U$.

Let $\mathcal G$ be an abelian group-scheme over $X$, and let
$\mathcal A$ be a $\mathcal G$--torsor. For any integer 
$n$, the $\mathcal G$--torsor on $X$ obtained by extending the
structure group of $\mathcal A$ using the endomorphism of
$\mathcal G$ defined by $z\, \longmapsto\, z^{n}$ will be
denoted by $({\mathcal A})^n$.

\begin{lemma}\label{power}
For any integer $d$, we have
\begin{enumerate}
\item{} $P^d \, \cong \, (P^1)^d$ as $P^0$ torsors over $\mathcal 
U$, and

\item{} $\widehat{P}^{d} \, \cong \, (\widehat{P}^{1})^d $ as 
$\widehat{P}^{0}$ torsors over $\mathcal U$.
\end{enumerate}
\end{lemma}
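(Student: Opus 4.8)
The plan is to write down, for every integer $d$, an explicit morphism of $P^{0}$--torsors over $\mathcal U$ from $(P^{1})^{d}$ to $P^{d}$ and to invoke the fact that any morphism between two torsors under the same group is automatically an isomorphism; the same device, carried out one level further, will handle $\widehat P^{d}$. Throughout one works relatively over $\mathcal U$: we have the family $\mathcal X_{\mathcal U}\to\mathcal U$ of spectral curves, the relative Picard scheme $\Pic_{\mathcal X_{\mathcal U}/\mathcal U}$, the covering map $\pi$ (written $p$ above) and the relative norm morphism, so that for every integer $m$ the fibrewise $\operatorname{Nm}^{-1}(\mathcal O_{X}(mx))$ defines a $P^{0}$--torsor $P^{m}$ over $\mathcal U$, and fibrewise tensor product gives, for all $a,b$, a map
\[
\mu_{a,b}\,:\,P^{a}\times_{\mathcal U}P^{b}\,\too\,P^{a+b}\,,\qquad (L_{1},L_{2})\,\longmapsto\,L_{1}\otimes L_{2}\,,
\]
which lands in $P^{a+b}$ because $\operatorname{Nm}$ is a homomorphism. (That $P^{m}$ is nonempty is standard, since $\operatorname{Nm}\circ\pi^{\ast}$ is multiplication by $r$ on $\Pic^{0}(X)$, hence surjective.)

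First I would unwind the definition of $(\,\cdot\,)^{d}$ recalled just before the lemma: an element of $(P^{1})^{d}$ is a class $[L,M]$ with $L\in P^{1}$, $M\in P^{0}$, subject to $[L\otimes N,M]=[L,N^{\otimes d}\otimes M]$ for $N\in P^{0}$, and $P^{0}$ acts by $M_{0}\cdot[L,M]=[L,M_{0}\otimes M]$. Then I would define
\[
f_{d}\,:\,(P^{1})^{d}\,\too\,P^{d}\,,\qquad [L,M]\,\longmapsto\,L^{\otimes d}\otimes M\,.
\]
The three routine checks are: (i) $f_{d}$ is well defined, since $(L\otimes N)^{\otimes d}\otimes M$ and $L^{\otimes d}\otimes(N^{\otimes d}\otimes M)$ agree; (ii) $f_{d}$ takes values in $P^{d}$, because $\operatorname{Nm}(L^{\otimes d}\otimes M)=\mathcal O_{X}(x)^{\otimes d}\otimes\mathcal O_{X}=\mathcal O_{X}(dx)$; and (iii) $f_{d}$ is $P^{0}$--equivariant, as $M_{0}\cdot[L,M]\mapsto M_{0}\otimes(L^{\otimes d}\otimes M)$. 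Being a morphism of $P^{0}$--torsors over $\mathcal U$, $f_{d}$ is an isomorphism, which is part (1). (The same formula subsumes the degenerate cases: $f_{0}$ is the identification of the trivial torsor $(P^{1})^{0}$ with $P^{0}$, and for $d<0$ one reads $L^{\otimes d}=(L^{\vee})^{\otimes|d|}$.)

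For part (2), recall that $\Gamma$ acts on each $P^{m}$ by $\gamma\cdot L=\pi^{\ast}\gamma\otimes L$ — this is the spectral description of the tensoring action of $\Gamma$ on parabolic Higgs bundles recalled in \S\ref{se2.2} — and that $\widehat P^{m}=P^{m}/\Gamma$ is a $\widehat P^{0}=P^{0}/\Gamma$--torsor. I would then define $\widehat f_{d}\,:\,(\widehat P^{1})^{d}\to\widehat P^{d}$ by the same formula $[\bar L,\bar M]\mapsto\overline{L^{\otimes d}\otimes M}$; the only new point is that this descends to the $\Gamma$--quotients, which holds because replacing $L$ by $\pi^{\ast}\gamma\otimes L$ multiplies $L^{\otimes d}$ by $\pi^{\ast}(\gamma^{\otimes d})$ and $\gamma^{\otimes d}\in\Gamma$ (the subgroup $\Gamma=\Pic^{0}(X)[r]$ is closed under $z\mapsto z^{d}$). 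Again $\widehat f_{d}$ is a morphism of $\widehat P^{0}$--torsors, hence an isomorphism, which is part (2). Equivalently, one notes that $f_{d}$ is $\Gamma$--equivariant for the torsor actions and that, for this reason, forming $(\,\cdot\,)^{d}$ commutes with passing to the $\Gamma$--quotient.

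The argument is formal, so I do not expect a serious obstacle; the work is bookkeeping. The two places that need care are: making sure the spectral curve, its Picard scheme, the norm map and $\pi^{\ast}$ are all set up \emph{in families} over $\mathcal U$, so that $\mu_{a,b}$, the $\Gamma$--action and the various quotients are genuinely morphisms over $\mathcal U$ (this is supplied by the parabolic spectral correspondence of \cite{BM} and \cite{GL}); and, in part (2), keeping track of \emph{which} copy of $\Gamma$ is quotiented out at each stage — the torsor $\Gamma\subset P^{0}$ acting by translation, not its image under $z\mapsto z^{d}$ — so that $\widehat f_{d}$ really is the descent of $f_{d}$.
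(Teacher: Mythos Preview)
The paper states this lemma without proof, evidently regarding it as an immediate consequence of the definition of $(\mathcal A)^{n}$ given in the lines just preceding it. Your argument supplies precisely the natural verification: you write down the explicit map $[L,M]\mapsto L^{\otimes d}\otimes M$, check that it is well defined, that it lands in $P^{d}$ via the multiplicativity of the norm, and that it is $P^{0}$--equivariant, and then invoke the standard fact that any morphism of torsors under the same group is an isomorphism. The descent to the $\Gamma$--quotients for part~(2) is handled correctly, and your remark that $\Gamma$ is stable under $z\mapsto z^{d}$ is exactly what is needed there. In short, there is no proof in the paper to compare against; your argument is correct and is the expected one.
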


In section \ref{se4} we will show that $P^d$ and $\widehat{P}^{d'}$
are mirror partners for a certain choice of a ``B'' field.

\section{Picard category and Gerbes}\label{Picard and 
Gerbes}\label{sec3}

We briefly recall definition of {\it sheaf of categories} over 
a scheme (for details see \cite{DG}, \cite{DM}, \cite{Gi}).
Let $\text{Sch}_{et}(X)$ denote the category of all \'etale 
neighborhoods of a scheme $X$. A \textit{presheaf of 
categories} on $\et(X)$ is a contravariant functor $\mathcal Q$ 
which assigns to every object $U\longrightarrow X$ in $\et(X)$
a category $\C(U)$ and to every morphism $f:U_1\longrightarrow 
U_2$ in $\et(X)$ a functor $f^*_\C:\C(U_2)\longrightarrow 
\C(U_1)$. Moreover, for every composition 
$$U_1\,\stackrel{f}{\longrightarrow}\, U_2\, 
\stackrel{g}{\longrightarrow}\, U_3\, ,$$
there is a transformation $f^*_\C\circ g^*_\C \longrightarrow 
(g\circ f)^*_\C$ satisfying an obvious compatibility relation 
for three-fold compositions.

A presheaf $\C$ of categories on $\et(X)$ is said to be
a \textit{sheaf of categories} if the following two axioms hold:
\begin{enumerate}
\item For $U\longrightarrow X$ in $\et(X)$ and a pair of objects
$C_1,C_2\in \C(U)$, the presheaf of sets on $\et(U)$
that assigns to $f\,:\,U'\,\longrightarrow\, U$ the set
$$\Hom_{\C(U')}(f_\C^*(C_1),\, f_\C^*(C_2))$$ is a sheaf.

\item If $f\,:\,U'\,\longrightarrow\, U$ is a covering,
then the category $\C(U)$ is equivalent to
the category of descent data on $\C(U')$ with respect to $f$, meaning 
every descent data on $\C(U')$ with respect to $f$ is.
\end{enumerate}

A Picard category is a tensor category, in which every object is 
invertible. A basic example is the category of line bundles over a scheme.

A sheaf of categories $\P$ is said to be a sheaf of Picard categories if
for every $$(U\to X)\,\in\,\et(X)\, ,$$ $\P(U)$ is endowed with a 
structure of a
Picard category such that the pull-back functors $f^*_\P$ are
compatible with the tensor product in an appropriate sense. If $\P_1$
and $\P_2$ are two sheaves of Picard categories, one defines (in a
straightforward fashion) a tensor functor between them.

A category $Q$ is said to be a {\it gerbe over the Picard category}
$P$, if $P$ acts on $Q$ as a tensor category, and for any object 
$C\,\in\, Q$ the functor $P \longrightarrow Q$ given by 
$$
B\,\in\,P\,\Longrightarrow\, \on{Action}(P,\, C)\in Q
$$
is an equivalence.

Now, if $\P$ is a sheaf of Picard categories and $\C$ is another sheaf 
of categories we say that $\C$ is a {\it gerbe} over $\P$, if the 
following two conditions hold:
\begin{itemize}
\item For every $(U\to X)\,\in\,\et(X)$, $\C(U)$ has the structure of a 
gerbe over $\P(U)$. This structure is compatible with
the pull-back functors $f^*_\P$ and $f^*_\C$.

\item There exists a covering $U\longrightarrow X$ such that $\C(U)$ is 
non-empty.
\end{itemize}

The basic example of a gerbe over an arbitrary sheaf of Picard
categories $\P$ is $\P$ itself; it is called the trivial $\P$--gerbe.

Let $\A$ be a sheaf of abelian groups over $\et(X)$ which takes values in an 
abelian group $A$. For an object $f\,:\,U\,\longrightarrow\,X$ of $\et(X)$, let 
$\Tors_\A(U)$ denote the category of $\A\vert_U$--torsors on $U$. This is a 
Picard category, and the assignment $U\longrightarrow \Tors_\A(U)$ defines a 
sheaf of Picard categories on $\et(X)$ which we will call $\Tors_\A$ or 
$\A$-torsor. A gerbe over the sheaf of Picard categories $\A$-torsor will be 
called an $\A$--gerbe. Hence an $\A$--gerbe will be thought of as a 
torsor over the sheaf of Picard categories $\A$--torsors. 

An {\em isomorphism} between $\A$--gerbes is an equivalence of sheaves 
of categories as torsors over the sheaf of $\A$--torsors. The 
isomorphism classes of $\A$--gerbes are in one-to-one
correspondence with $H^2(X, \,\A)$ (see \cite{Gi}). 
 
A {\em trivialization} of an $\A$-gerbe is an isomorphism with the 
trivial gerbe $\A$--torsor. Two trivializations $z\, ,z'$ are {\em
equivalent} if the automorphism $z' \circ z^{-1}$ is given by
tensorization with a trivial $\A$-torsor. The space of equivalence
classes of trivializations of a trivial $\A$--gerbe $B$ denoted
$\Triv^{\mathcal A}(X,\,B)$ is an $H^1(X,\,{\A})$-torsor over a point \cite{Gi}. 

\begin{remark}\label{remark1}
{\rm Fix a short exact sequence $$0\,\longrightarrow \,\A\,\longrightarrow
\,\A'' \,\longrightarrow\,\A'\,\longrightarrow\, 0$$ of sheaves 
of groups (they need not be abelian) on $X$, and let $\tau_{\A'}$ be an 
$\A'$--torsor over $X$.
We introduce a sheaf of categories $\C\,=\,\C_{\tau_{\A'}}$ as follows:
For $U\,\in\,\et(X)$, let $\C(U)$ be the category of
all ``liftings'' of $\tau_{\A'}\vert_U$ to an $\A''\vert_U$--torsor. It is 
easy to check that $\C$ is a $\A$-gerbe over $X$.}
\end{remark}

\begin{remark}\label{remark2}
{\rm Let $\C_1$ be a $\P_1$-gerbe over $X$, and let ${\mathbf 
a}:\P_1\longrightarrow \P_2$ be a tensor functor of Picard category over 
$X$. Then one can construct a canonical induced $\P_2$--gerbe $\C_2$ 
over $X$ with the property that there exists a functor $\C_1\, 
\longrightarrow\, \C_2$,
compatible with the actions of $\P_1$ and $\P_2$ via ${\mathbf a}$}.
\end{remark}

Let $\C_1$ and $\C_2$ be two $\mathcal A$--gerbes over $X$.
Then $\C_1\times_X \C_2$ is an ${\mathcal A}\times_X {\mathcal 
A}$--gerbe over $X$. Consider the multiplication homomorphism
${\mathcal A}\times_X {\mathcal A}\, \longrightarrow\,
{\mathcal A}$. Let $\C_1\underset{\P}\otimes \C_2$ be the
$\mathcal A$--gerbe over $X$ given by $\C_1\times_X \C_2$ using
this homomorphism (see Remark \ref{remark2}). This
$\mathcal A$--gerbe $\C_1\underset{\P}\otimes \C_2$ is
called the {\it tensor product} of $\C_1$ and $\C_2$.

Now consider the inversion homomorphism
${\mathcal A}\, \longrightarrow\, {\mathcal A}$.
The $\mathcal A$--gerbe over $X$ given by $\C_1$ 
using this homomorphism (see Remark \ref{remark2}) will be 
denoted by $(\C_1)^{-1}$.

\medskip
\noindent
\textbf{Notation.}\, Let $\C$ be a $\mathcal A$--gerbe over $X$. 
For any positive integer $n$, the $n$-fold tensor product
$\C\underset{\P}\otimes \cdots \underset{\P}\otimes\C$ will be 
denoted by $(\C)^d$. For any negative integer $n$, the $n$-fold 
tensor product ${\C}^{-1}\underset{\P}\otimes \cdots 
\underset{\P}\otimes{\C}^{-1}$ will be denoted by $(\C)^{-d}$.
\medskip

\begin{remark}\label{remark3}
{\rm If $B$ is a trivial $\mathcal A$--gerbe
over $X$, then the tensor power $B^e$ for any $e \in \mathbb Z$ 
is also trivial, and, moreover, the set of all 
trivializations, which is a $H^1(X,\,{\mathcal A})$-torsor, has the 
following identification
\[
{\rm Triv}^{\mathcal A}(X, B^e)\, =\, {\rm Triv}^{\mathcal A}(X, 
B)^e
\] 
(it is an identification of $H^1(X,\,{\mathcal A})$-torsors).}
\end{remark}

\section{Trivializations and B fields}\label{se4}

Let $\mathcal U(1)$ (respectively, $\mathcal Z_{r}$) be the sheaf of 
abelian groups 
over ${\rm Sch}_{et}(\PHM)$ which takes values in $U(1)$
(respectively, $\mathbb Z_r$), and let 
$\mathcal {T}ors_{_{\mathcal U(1)}}$ (respectively, $\mathcal 
{T}ors_{_{\mathcal Z_r}}$) be the sheaf of Picard categories over $\PHM$ 
(see Section \ref{sec3} for definition). 

There is a universal parabolic Higgs vector bundle over $\PHM \times X$,
because the parabolic flags are complete \cite[p. 465, Proposition 
3.2]{BY}. Let $(\mathcal E,\Phi)$ be a Universal parabolic Higgs 
bundle on $\PHM \times X$. Restricting $\mathcal E$ to $\PHM \times 
\{c\}$, where $c\, \in\, X\setminus D$ is a fixed point, we get a 
vector bundle $\mathbb E$ on $\PHM$. 
Let $\mathbb P\, :=\,P(\mathbb E)$ be the associated projective bundle 
on $\PHM$ parametrizing line in the fibers of $\mathbb E$. From the exact 
sequence
$$
e\, \longrightarrow\, \mathbb Z_r\, \longrightarrow\,\text{SL}(r,{\mathbb C})
\, \longrightarrow\,\text{PGL}(r,{\mathbb C})\, \longrightarrow\, e
$$
it follows that the obstruction to lift the $\text{PGL}(r,{\mathbb C})$--bundle 
$\mathbb P$ to a $\text{SL}(r,{\mathbb C})$--bundle gives a class $B \,\in\, 
H^2(\PHM,\, {\mathcal Z}_r)$. This cohomology class $B$ corresponds to
the ${\mathcal Z}_r$--gerbe on $\PHM$ defined by the liftings of $\mathbb 
P$ to a $\text{SL}(r,{\mathbb C})$ bundle (see Remark \ref{remark1}).

\begin{lemma}\label{triviality}
The restriction of $B$ to each regular fiber $P^d$ of
the Hitchin map $h$ (see \eqref{hitchin}) is trivial as a 
$\mathcal{Z}_r$--gerbe.
\end{lemma}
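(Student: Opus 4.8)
The plan is to show that the obstruction class $B$, when restricted to a regular Hitchin fiber $P^d = \mathrm{Prym}^{d'}(X_s)$, becomes trivial as a $\mathcal Z_r$--gerbe, which by the classification of $\mathcal Z_r$--gerbes amounts to showing that the image of $B$ under $H^2(\PHM,\,\mathcal Z_r)\to H^2(P^d,\,\mathcal Z_r)$ vanishes. The class $B$ arises as the obstruction to lifting the $\mathrm{PGL}(r,\mathbb C)$--bundle $\mathbb P = P(\mathbb E)$ to an $\mathrm{SL}(r,\mathbb C)$--bundle, so it suffices to produce, after restriction to $P^d$, an actual $\mathrm{SL}(r,\mathbb C)$--bundle (equivalently a rank-$r$ vector bundle with trivialized determinant) whose projectivization is $\mathbb P\vert_{P^d}$. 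First I would invoke the identification of the Hitchin fiber with the Prym variety of the spectral curve from \cite{BM}, \cite{GL}, recalled in Section~\ref{se2.2}: for $s\in\mathcal U$ the fiber $P^d$ parametrizes line bundles $L$ on the smooth connected spectral curve $X_s$ with $\det(\pi_*L)$ fixed, and the corresponding parabolic Higgs bundle is recovered as $(\pi_*L,\,\pi_*(\,\cdot\, z))$ together with the parabolic flag induced by the points of $X_s$ over $D$.

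The key step is then to construct a universal (Poincar\'e) line bundle $\mathcal L$ on $X_s\times P^d$ and push it forward along $\pi\times\mathrm{id}$ to obtain a rank-$r$ bundle $\mathcal V := (\pi\times\mathrm{id})_*\mathcal L$ on $X_s\times P^d$ whose restriction to $\{c\}\times P^d$ (for $c\in X\setminus D$, which has $r$ distinct preimages in $X_s$) is a vector bundle on $P^d$ projectively equivalent to $\mathbb E\vert_{P^d}$; indeed by construction $P(\mathcal V\vert_{\{c\}\times P^d}) \cong \mathbb P\vert_{P^d}$. A Poincar\'e bundle on $X_s\times\mathrm{Pic}^{d'}(X_s)$ always exists (the Jacobian of a smooth projective curve is fine), and restricting it to the sub-torsor $P^d\subset\mathrm{Pic}^{d'}(X_s)$ gives the desired $\mathcal L$. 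This directly exhibits $\mathbb P\vert_{P^d}$ as the projectivization of a genuine vector bundle, hence the $\mathrm{PGL}(r,\mathbb C)$--bundle lifts to a $\mathrm{GL}(r,\mathbb C)$--bundle; twisting by a suitable line bundle on $P^d$ (an $r$-th root of the determinant, which exists since $\mathrm{Pic}(P^d)$ is divisible as $P^d$ is an abelian variety torsor) normalizes the determinant to be trivial and produces the $\mathrm{SL}(r,\mathbb C)$--reduction, killing $B\vert_{P^d}$.

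I expect the main obstacle to be the bookkeeping needed to identify $\mathbb E\vert_{P^d}$ with the pushed-forward Poincar\'e bundle up to projective equivalence, rather than on the nose: the universal parabolic Higgs bundle $\mathcal E$ on $\PHM\times X$ is only unique up to tensoring by a line bundle pulled back from $\PHM$, so its restriction $\mathbb E$ and the bundle $(\pi\times\mathrm{id})_*\mathcal L\vert_{\{c\}\times P^d}$ need not be literally equal, only isomorphic after tensoring by a line bundle from $P^d$. This is harmless for the present purpose because the associated $\mathrm{PGL}(r,\mathbb C)$--bundles, and hence the gerbe classes, are insensitive to such a twist; but one should state this carefully. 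A secondary point is checking that the spectral correspondence of \cite{BM}, \cite{GL} is genuinely functorial in families over $P^d$ (so that the fiberwise construction glues to the global Poincar\'e bundle), which is part of the content of those references and may simply be quoted. Once these compatibilities are in place, the vanishing of $B\vert_{P^d}$ is immediate.
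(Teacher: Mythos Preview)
Your proposal is correct and follows essentially the same route as the paper: take a Poincar\'e line bundle $\mathcal L$ on $P^d\times X_s$, push forward along $\pi$ to lift $\mathbb P\vert_{P^d}$ to a $\mathrm{GL}(r,\mathbb C)$--bundle, then twist by an $r$-th root of the inverse determinant to reach $\mathrm{SL}(r,\mathbb C)$. One correction: your claim that ``$\mathrm{Pic}(P^d)$ is divisible as $P^d$ is an abelian variety torsor'' is false as stated (the N\'eron--Severi group of an abelian variety is finitely generated and typically not divisible); the paper instead observes that the particular determinant $\xi=\bigotimes_{y\in\pi^{-1}(c)}\mathcal L\vert_{P^d\times\{y\}}$ has N\'eron--Severi class divisible by $r$, being an $r$-fold product of algebraically equivalent line bundles, and then invokes divisibility of $\mathrm{Pic}^0(P^d)$ to produce the root.
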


\begin{proof}
Let $\mathcal L$ be a universal line bundle on $P^d \times X_s$
(see Section \ref{sec2} for $P^d$).
The projection of the spectral curve $X_s$ to $X$
will be denoted by $\pi$.
The push-forward $(\text{Id}\times \pi)_{*} \mathcal L$ is a 
vector bundle which admits a family of parabolic Higgs 
field inducing the inclusion $P^d \, \subset \, \PHM$. Hence we have 
${\mathbb P}((\text{Id}\times \pi)_* \mathcal{L})\vert_{P^{d} 
\times \{c\}}\,=\, 
\mathbb P$. Note that $\det 
((\text{Id}\times \pi)_*\mathcal{L})\vert_{P^{d}\times 
\{c\}}$ is isomorphic to $\xi\,=\, \otimes_{y \in \pi^{-1}(c)} 
\mathcal L\vert_{P^d \times \{y\}}$ (the points of $\pi^{-1}(c)$ 
are taken with multiplicities).

Consider the above line bundle $\xi$. Let $\eta$ be the $r$-th 
root of the line bundle $\xi^*$ on $P^d$, meaning $\eta^r\,= 
\,\xi^*$. Note that since the N\'eron--Severi class of
$\xi$ is divisible by $r$, such a line bundle $\eta$ exists.
It is easy to see that $(\text{Id}\times\pi)_*(\mathcal L\otimes 
p^{*}\eta)$, where $p$ is the projection of $P^d \times X_s$ to 
$P^d$, is a $\text{SL}(r,{\mathbb C})$ bundle on $P^{d} \times X$ such that 
${\mathbb P}((\text{Id}\times\pi)_*(\mathcal L \otimes 
p^{*}\eta))\,=\, \mathbb P$. Hence $B$ is a trivial 
${\mathcal Z}_r$--gerbe when restricted to $P^d$.
\end{proof}

As seen in the proof of Lemma \ref{triviality}, a trivialization 
of $B$ on $P^d$ is equivalent of giving a universal line bundle 
$\mathcal L\longrightarrow P^d\times X_s$ such that $\det 
(\text{Id}\times\pi)_* \mathcal L\vert_{P^d 
\times \{c\}}$ is trivial on $P^d \times \{
c \}$. Hence we have a natural identification of the set of trivializations of 
$B$ denoted as $\text{Triv}^{{\mathcal 
Z}_r}(P^d,B)$ with the set of isomorphism classes of such 
line bundles on $P^d \times X_s$; define
\[
T\,:=\,\{\mathcal L \rightarrow P^d \times X_s\, \mid \, 
{\mathcal L} 
\text{~is universal bundle with~} \text{det}({\rm Id} \times 
\pi)_{*}({\mathcal L})\vert_{P^d \times \{c\}} \,=\,\mathcal 
O_{P^d}\}\, .
\]
Note that this $T$ is naturally a ${\widehat P}^0[r]$--torsor 
since
$$\det (\text{Id}\times \pi)_*(\mathcal L\otimes p^* 
L)\,=\, \det (\text{Id}\times \pi)_*(\mathcal L)\otimes L^r\, .
$$ 

We have the following natural isomorphism
\[
H^1(P^d,\, {\mathcal Z}_r) \, =\, H^1(P^0,\, {\mathcal Z}_r) 
\,=\,{\widehat P}^0[r]\, .
\]
In terms of this isomorphism, the $H^1(P^d,\,{\mathcal 
Z}_r)$--torsor $\text{Triv}^{\mathbb Z_r}(P^d,B)$ gets
identified with the ${\widehat P}^0[r]$--torsor $T$. 
Using this identification, the set of trivialization of $B$ on 
$P^d$ will be considered as a $H^1(P^d,\,{\mathcal Z}_{r})$-torsor.

By Remark \ref{remark2}, any $\mathcal Z_{r}$--gerbe extends to 
a ${\mathcal U}(1)$--gerbe. Let $\mathcal B$ denote the
${\mathcal U}(1)$-gerbe given by the $\mathcal Z_r$--gerbe $B$. 
Since the $\mathcal Z_r$ gerbe $B$ on $P^d$ is trivial, the 
extended ${\mathcal U}(1)$--gerbe $\mathcal B$ on $P^d$ is also trivial. 
The set of all trivializations of $\mathcal B$ on $P^d$ as 
is denoted by $\text{Triv}^{{\mathcal U}(1)}(P^d, \mathcal B)$. This
$\text{Triv}^{{\mathcal U}(1)}(P^d, \mathcal B)$ is a
$H^{1}(P^d,\, {\mathcal U}(1))$--torsor. 

\begin{theorem}\label{ee}
For any $d\, ,e \,\in\, \mathbb Z$, there is an isomorphism of 
${\widehat P}^0$--torsors 
$$
{\rm Triv}^{{\mathcal U}(1)}(P^d,\,{\mathcal B}^e)
\,\stackrel{\sim}{\longrightarrow}\, {\widehat P}^e\, .$$ 
\end{theorem}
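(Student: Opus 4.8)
The plan is to reduce the general statement to the case $e=1$ by means of the power-map compatibilities already available, and then to prove the $e=1$ case by writing down an explicit map from $\mathrm{Triv}^{\mathcal U(1)}(P^d,\mathcal B)$ to $\widehat P^d$ and checking it is a morphism of torsors. First I would recall the two structural facts established just before the statement: the $\mathcal Z_r$--gerbe $B$, hence the $\mathcal U(1)$--gerbe $\mathcal B$, restricts trivially to $P^d$ (Lemma \ref{triviality}); and the set of trivializations of the $\mathcal Z_r$--gerbe $B$ on $P^d$ is canonically the $\widehat P^0[r]$--torsor $T$ of universal line bundles $\mathcal L\to P^d\times X_s$ with $\det(\mathrm{Id}\times\pi)_*\mathcal L|_{P^d\times\{c\}}$ trivial. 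Passing from the $\mathcal Z_r$--gerbe to the $\mathcal U(1)$--gerbe replaces the $\widehat P^0[r]$--torsor structure by an $H^1(P^d,\mathcal U(1))$--torsor structure; by the standard identification $H^1(P^d,\mathcal U(1))=H^1(P^d,\mathcal O)/H^1(P^d,\mathbb Z)\oplus(\text{finite part})$, together with the isogeny description of $\widehat P^0=\mathrm{Prym}^0(X_s)/\Gamma$, one gets a natural isomorphism $H^1(P^d,\mathcal U(1))\cong \widehat P^0$ as groups. So $\mathrm{Triv}^{\mathcal U(1)}(P^d,\mathcal B)$ is a priori a $\widehat P^0$--torsor, and the content of the theorem is to identify which $\widehat P^0$--torsor it is, for each twist $\mathcal B^e$.

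Next I would handle the twist by powers. By Remark \ref{remark3}, $\mathrm{Triv}^{\mathcal U(1)}(P^d,\mathcal B^e)=\mathrm{Triv}^{\mathcal U(1)}(P^d,\mathcal B)^e$ as $H^1(P^d,\mathcal U(1))$--torsors, i.e. the $e$-th power in the sense of extension of structure group along $z\mapsto z^e$ on $\widehat P^0$. On the other side, Lemma \ref{power} gives $\widehat P^{e}\cong(\widehat P^{1})^{e}$ as $\widehat P^0$--torsors. Thus it suffices to produce, for every $d$, a canonical isomorphism of $\widehat P^0$--torsors $\mathrm{Triv}^{\mathcal U(1)}(P^d,\mathcal B)\xrightarrow{\ \sim\ }\widehat P^{1}$, and then raise both sides to the $e$-th power; I would note that one must check the identification is compatible with the power operation, which is immediate from the construction since it is built out of the universal line bundle functorially.

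For the $e=1$ case I would argue as follows. A trivialization of $\mathcal B$ on $P^d$ is represented by a universal line bundle $\mathcal L\to P^d\times X_s$ normalized so that $\det(\mathrm{Id}\times\pi)_*\mathcal L|_{P^d\times\{c\}}=\mathcal O_{P^d}$. Over a point $L\in P^d=\mathrm{Prym}^{d'}(X_s)$ the restriction $\mathcal L|_{\{L\}\times X_s}$ is isomorphic to $L$ itself, but $\mathcal L$ as a family records in addition a rigidification; concretely, $\mathcal L$ defines a morphism $P^d\to\mathrm{Pic}^{d'}(X_s)$ which, composed with the quotient by $\Gamma$, lands in $\mathrm{Pic}^{d'}(X_s)/\Gamma$, and one reads off a point of $\widehat P^1=\mathrm{Prym}^{1}(X_s)/\Gamma$ after the standard degree-shift bookkeeping (using $d'=d+r(r-1)(n+2g-2)/2$ and that $P^d\cong(P^1)^d$ from Lemma \ref{power}). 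Equivalently — and this is the cleaner route I would actually write — I would use the see-saw / Poincaré-bundle description: a normalized universal bundle on $P^d\times X_s$ is the same as a point of the dual abelian variety of $P^d$ up to the rigidification, and the dual of $\mathrm{Prym}^0(X_s)$ (with its principal polarization coming from the curve) is again a Prym, whose $\Gamma$-quotient on the appropriate degree component is exactly $\widehat P^1$. Tensoring $\mathcal L$ by $p^*M$ for $M\in\widehat P^0=\mathrm{Prym}^0(X_s)/\Gamma$ changes $\det(\mathrm{Id}\times\pi)_*\mathcal L$ by $M^r$, hence changes the $T$-class (and therefore the associated point of $\widehat P^1$) exactly by the $\widehat P^0$--action; this is the verification that the map is $\widehat P^0$--equivariant, and it is surjective because $T$ is non-empty and the construction is clearly injective on isomorphism classes.

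The main obstacle I expect is not any single hard computation but getting the normalizations and the polarization/duality identification exactly right: one must pin down the canonical isomorphism $H^1(P^d,\mathcal U(1))\cong\widehat P^0$ and, under it, match the $T$-torsor structure (coming from $\det(\mathrm{Id}\times\pi)_*$ and the $r$-th power map) with the natural $\widehat P^0$-action on $\widehat P^1$, all while tracking the degree shift $d\mapsto d'$ and the $\Gamma$-quotients on both factors. This is precisely the point where the identification of the parabolic Hitchin fiber with a Prym of the spectral cover (from \cite{BM}, \cite{GL}) is used, and where the duality between the $\mathrm{SL}_r$-Prym and the $\mathrm{PGL}_r$-Prym — the analogue of the Hausel--Thaddeus computation in \cite{HT2} — enters; everything else (reduction to $e=1$ via Remark \ref{remark3} and Lemma \ref{power}, equivariance via the $M^r$ formula) is formal.
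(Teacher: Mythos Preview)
Your reduction to $e=1$ via Remark \ref{remark3} and Lemma \ref{power} is correct and is exactly what the paper does. The gap is in your treatment of the case $e=1$.

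Your first route contains an actual error. A universal line bundle $\mathcal L$ on $P^d\times X_s$ by definition restricts to $L$ over $\{L\}\times X_s$, so the morphism $P^d\to \mathrm{Pic}^{d'}(X_s)$ you describe is the tautological inclusion and is \emph{independent of the choice of $\mathcal L$}. It therefore cannot distinguish different trivializations, and composing with the quotient by $\Gamma$ does not help: you still get the constant map $L\mapsto [L]$. The ``rigidification'' carried by $\mathcal L$ lives on the \emph{other} factor---it is the line bundle $\mathcal L|_{P^d\times\{y\}}$ on $P^d$ that varies with $\mathcal L$---and this is what must be exploited. Your second route (``see-saw / Poincar\'e-bundle description'') points in that direction, but as written it is a heuristic rather than an argument; you yourself flag that matching the polarization/duality identifications is the ``main obstacle'', and that is in fact the entire content of the $e=1$ case.

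The paper's proof of the $e=1$ case is quite different in execution. It first passes from $P^d$ to the ambient Picard variety $\widetilde{J}^{\bullet}=\mathrm{Pic}^{\bullet}(X_s)$: using $\mathrm{Pic}^0(P^d)\cong \widetilde{J^0}/J^0$ (from \cite[Lemmas 2.2, 2.3]{HT2}), the torsor $\mathrm{Triv}^{\mathcal U(1)}(P^d,\mathcal B)$ is identified with $\mathfrak T/J^0$, where $\mathfrak T$ is the set of universal line bundles on $\widetilde{J^d}\times X_s$ whose restriction to each $\widetilde{J^d}\times\{y\}$ lies in $\mathrm{Pic}^0(\widetilde{J^d})$; likewise $\widehat P^1$ is realized as a $J^0$-quotient of $\widetilde{J^1}$. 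The problem is thus reduced to matching the $\widetilde{J^0}$-torsors $\mathfrak T$ and $\widetilde{J^1}$. For this the paper uses a neat trick: it writes down two maps from the curve $X_s$, the Abel--Jacobi map $f_1\colon y\mapsto \mathcal O_{X_s}(y)\in\widetilde{J^1}$ and the map $f_2\colon y\mapsto$ (the unique universal bundle trivialized along $P^d\times\{y\}$) $\in\mathfrak T$, and then checks the identity $f_1(y')-f_1(y)=f_2(y)-f_2(y')$ using the symmetry of the Poincar\'e bundle on $\widetilde{J^0}\times\widetilde{J^0}$ under swapping the factors. Since both targets are torsors, declaring $f_1(y)\leftrightarrow f_2(y)$ then yields the desired isomorphism. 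This Abel--Jacobi / Poincar\'e-symmetry step is the key idea your proposal is missing.
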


\begin{proof}
{}From Lemma \ref{power} and Remark \ref{remark3},
$$\text{Triv}^{{\mathcal U}(1)}(P^d,\mathcal B^e) \,\cong
\,(\text{Triv}^{{\mathcal U}(1)}(P^d,\mathcal B^1))^e \,\ 
\text{~and~}\,
\,\ \widehat{P}^e \, \cong \, (\widehat{P}^1)^e\, .$$
Therefore, it is 
enough to prove the theorem under the assumption that
$e \,=\,1$. So set $e \,=\,1$.

We have a natural identification as extension of scalers,
\[
\text{Triv}^{{\mathcal U}(1)}(P^d, \mathcal B) \,=\, 
\frac{\text{Triv}^{\mathcal Z_r}(P^d, B) \times H^1(P^d,\,
{\mathcal U}(1))} {H^1(P^d,\, {\mathcal Z}_{r})}\, .
\]
Under this identification, the above torsor
${\rm Triv}^{{\mathcal U}(1)}(P^d,\,{\mathcal B}^e)$ can be 
identified set theoretically with $\mathfrak T_1$ defined as 
follows:
$$
\{\mathcal L \longrightarrow P^d \times X_s\,\mid\, \, \mathcal L 
~\,
 \text{is a universal line bundle and}~ \, \mathcal L\vert_{P^d 
\times \{y\}} \, \in \text{Pic}^0({P^d})~ \,\forall\, y \in 
X_s\}\, .$$ 

We have a natural identification $\text{Pic}^{0}(P^d)\, \cong \, 
\frac{\text{Pic}^{0}(\widetilde{J^0})}{\text{Pic}^0{(J^0)}} 
\,=\, \frac{\widetilde{J^0}}{J^0}$ \cite[Lemma 2.2, Lemma 
2.3]{HT2}, hence $\mathfrak T_1$ can be 
identified 
with $\frac{\mathfrak T}{J^0}$ where $\mathfrak T$ is defined as 
follows:
$$
\{\mathcal L \longrightarrow \widetilde{J^d} \times X_s\,\mid \, 
\mathcal 
L ~\, \text{is a universal line bundle and}~ \, {\mathcal 
L}\vert_{\widetilde{J^d} \times \{y\}} \, \in\,
\text{Pic}^0{\widetilde{J^d}}\, \forall\, y\, \in X_s\}\, .$$

There is a natural isomorphism of $J^0$ with $\text{Pic}^0(J^0)$ 
given by the natural theta polarization on $J^0$. In terms of 
this identification, the action of $J^0$ on $\mathfrak T$ 
corresponds to the action of $\text{Pic}^0(J^0)$ defined by 
pull-back. Note that $\widehat{P}^d$ can be identified with 
$\frac{\widetilde{J^d}}{J^d}$. So it is enough to show that 
$\mathfrak T$ and $\widetilde{J^1}$ are isomorphic as 
$\widetilde{J^0}$-torsors.

The idea is to give two surjective set theoretic maps $f_1$ and 
$f_2$ from $X_s$ to these two torsors:
\[
\xymatrix{
& X_s \ar[ld]_{f_1} \ar[rd]^{f_2} & \\
\widetilde{J^1} && \mathfrak{T} 
}
\]
such that,
\begin{equation}\label{eql}
f_1(y') - f_1(y) \,=\, f_2(y) -f_2(y')\, .
\end{equation}
In view of this equality and the fact that both are 
$\widetilde{J^0}$-torsors, the identification of $f_1(y)$ with 
$f_2(y)$ gives the required isomorphism between $\widetilde{J^1}$
and $\mathfrak T$ as $\widetilde{J^0}$-torsors.

Now we will construct $f_1$ and $f_2$. The map 
$f_1$ is the Abel-Jacobi map which takes any $y \,\in\, X_s$ to 
the line bundle $\mathcal O_{X_s}(y)$ (which is an element of 
$\widetilde{J^1}$). The map $f_2$ sends any $y$ to the unique 
universal line bundle $\mathcal L$ on $P^d \times X_s$ satisfying 
the condition that ${\mathcal L}\vert_{P^d \times \{y\}} \, =\, 
\mathcal O_{ P^d}$. To show that
\eqref{eql} holds, we need the following: 
\begin{eqnarray}\label{eqlty}
(\mathcal L \otimes p^*(\mathcal O_{X_s}(-y'))\vert_{P^d \times 
\{y\}} \,= \, f_1(y) -f_1(y') \hspace{1cm} \text{for any}~\, y' 
\,\in\, X_s\, .
\end{eqnarray} 

Now, \eqref{eqlty} follows from two facts: Firstly, any universal 
bundle on $P^d \times X_s$ is of the form $p_2^{*}(L_0)\otimes 
F^{*}\mathcal P$, where $p_2$ is the projection to 
$X_s$, $L_0 \in \widetilde{J^d}$ is a fixed line bundle,
$\mathcal P$ is the universal line bundle on 
$\text{Pic}^0(\widetilde{J}^0) \times \widetilde{J}^0$ 
($=\,\widetilde{J}^0 \times \widetilde{J}^0$), 
and 
\[
F:\, \widetilde{J^d} \times X_s \,\longrightarrow\, 
\widetilde{J^0} \times \widetilde{J^0} 
\]
is defined by $(L,y) \,\longmapsto \, (L \otimes L_0^{-1},f_1(y) 
-f_1(y'))$.

Secondly, the involution of $\widetilde{J^0} \times 
\widetilde{J^{0}}$ exchanging the two factors takes the 
universal line bundle on $\widetilde{J^0} \times
\widetilde{J^{0}}\,=\, \text{Pic}^0(\widetilde{J}^0) \times
\widetilde{J}^0$ to its dual.
\end{proof}

Let
\begin{equation}\label{wtg0}
\widetilde \Gamma\,=\,\bigsqcup_{\gamma \in \Gamma} L_\gamma -\{0\}
\end{equation}
be the disjoint union of the total spaces of the nonzero 
vectors of the line bundles $L_\gamma$.
This has the structure of a group scheme over $X$ whose fiber at 
$x\,\in\, X$ is an abelian extension
\begin{equation}\label{wtg}
1 \longrightarrow {\mathbb C}^* \longrightarrow {\widetilde 
\Gamma}_c \longrightarrow \Gamma \longrightarrow 0\, .
\end{equation}

The group $\Gamma$ acts on $\PHM$; the action of any $L\, \in\,
\Gamma$ sends any $(E_*,\phi)$ to 
$(E_* \otimes L, \phi \otimes {\rm Id}_L)$; since the parabolic 
structure of $L$ is trivial, we may use the notation of the 
usual tensor product (note that $\phi \otimes {\rm Id}_L$ is a Higgs
field on $E_* \otimes L$ in a natural way). Since the
quasi-parabolic flags are complete, there exists an universal
parabolic Higgs bundle $(\mathcal E,\Phi)$ on $\PHM \times X$ 
\cite{BY}. In particular, $\mathcal E$ is a universal vector 
bundle on $\PHM \times X$ and 
$\Phi \, \in \, H^0(\mathbb{E}nd({\mathcal E}) \otimes p^*_X 
K(D))$, where $p_X$ is the projection of $\PHM \times X$ to $X$. 
Consider the projective bundle $P(\mathcal E)$ parametrizing the
lines in the fibers of $\mathcal E$. The group 
$\Gamma$ acts on $P(\mathcal E)$; the action of any $L\,\in\,
\Gamma$ sends any $((E_*\, ,\phi)\, ,\xi)$ to $((E_* 
\otimes L\, , \phi\otimes {\rm Id}_L)\, ,\xi \otimes L_y)$, where $\xi\, \in\, 
(E_*)_y$. Fix a point $c\, \in\, X$. Restricting $P(\mathcal E)$ 
to $\PHM \times \{c\}$ we get a $\Gamma$-equivariant projective bundle 
$\mathbb P$ on $\PHM$. The obstruction class to lift the 
$\Gamma$-equivariant $\text{PGL}(r,{\mathbb C})$-bundle $\mathbb P$ into a 
$\Gamma$-equivariant $\text{SL}(r,{\mathbb C})$-bundle gives a nontrivial 
$\Gamma$-equivariant gerbe 
${B} \,\in\, H^2_{\Gamma} (\PHM,\,{\mathcal Z}_r)$. Let
$\widetilde B$ be the $\mathcal Z_r$-gerbe $\widetilde{\PHM}$
forgetting the $\Gamma$-equivariant structure on $B$.

The following technical lemma which will be used in 
proving Lemma \ref{trivia}.

\begin{lemma}[{\cite[Lemma 3.3]{HT2}}]\label{crucial}
Let $\mathcal L \longrightarrow J^0 \times X$ be the universal 
line bundle which is trivial on $J^0 \times \{ c \}$. Then there 
is an action over $X$ of $\widetilde
\Gamma$ (constructed in \eqref{wtg0}) on the total space of 
$\mathcal L$, lifting the action of $\Gamma$ on $J^0$
by translation, so that the scalars ${\mathbb C}^*$ act with 
weight one on the fibers. 
\end{lemma}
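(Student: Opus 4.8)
The plan is to construct the $\widetilde\Gamma$-action fiberwise over $X$ and then check that it glues to an honest action over $X$. Fix a base point $c\in X$ and recall that a point of $\Gamma=\Pic^0(X)[r]$ is an $r$-torsion line bundle $L_\gamma$ together with a chosen isomorphism $L_\gamma^{\otimes r}\cong\SO_X$; the group $\widetilde\Gamma_x$ in \eqref{wtg} is the set of nonzero vectors in the fibers $(L_\gamma)_x$, with multiplication induced by the tensor product of line bundles and the trivializations $L_\gamma^{\otimes r}\cong\SO_X$. The universal line bundle $\mathcal L\to J^0\times X$, normalized to be trivial on $J^0\times\{c\}$, has the defining property that its restriction $\mathcal L\vert_{J^0\times\{x\}}$ is the Poincar\'e line bundle $P_x$ on $J^0$ whose fiber over $[M]\in J^0$ is (canonically) $M_x\otimes M_c^{-1}$. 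The key structural input is the see-saw / theorem-of-the-cube identity: for $\gamma\in\Gamma$ with associated $L_\gamma\in J^0$, translation $t_\gamma\colon J^0\to J^0$ satisfies $t_\gamma^*P_x\cong P_x\otimes (L_\gamma)_x^{?}$ — more precisely there is a canonical isomorphism $t_\gamma^*\mathcal L\cong \mathcal L\otimes q^*(\text{line bundle pulled back from }X)$, and tracking fibers over $X$ one sees that the discrepancy at $x$ is exactly the one-dimensional vector space $(L_\gamma)_x$.

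Concretely, first I would produce, for each fixed $\gamma\in\Gamma$ and each $x\in X$, a canonical isomorphism $t_\gamma^*(\mathcal L\vert_{J^0\times\{x\}})\xrightarrow{\ \sim\ } \mathcal L\vert_{J^0\times\{x\}}\otimes (L_\gamma)_x$ of line bundles on $J^0$, depending linearly and algebraically on $x$; this is where the normalization ``trivial on $J^0\times\{c\}$'' is used, since it pins down the ambiguity by a scalar and forces the twisting line bundle over $X$ to be $L_\gamma$ itself (using the trivialization $L_\gamma^{\otimes r}\cong\SO_X$ to match degrees/normalizations). Equivalently, the Mumford group / theta-group description of the $r$-torsion gives, on $J^0\times X$, a canonical lift of the $\Gamma$-translation action on $J^0$ to a $\widetilde\Gamma$-action on the total space of $\mathcal L$: a vector $v\in(L_\gamma)_x\setminus\{0\}$ acts by the composite of $t_\gamma^*$ with multiplication by $v$ under the above isomorphism. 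The scalars $\mathbb C^*\subset\widetilde\Gamma_x$ then act by plain fiberwise multiplication, i.e.\ with weight one, as required; and the cocycle/associativity condition for this action is precisely the commutativity of the theta-group multiplication, equivalently the theorem of the cube for $\mathcal L$ on $J^0\times J^0\times X$ after identifying $\Gamma\subset J^0$.

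The steps, in order: (i) write down the Poincar\'e-bundle description of $\mathcal L$ and the normalization isomorphism on $J^0\times\{c\}$; (ii) for each $\gamma$ construct the canonical isomorphism $t_\gamma^*\mathcal L\cong\mathcal L\otimes q_X^*L_\gamma$ on $J^0\times X$ via the seesaw theorem, checking that the fiber of $q_X^*L_\gamma$ at $x$ is $(L_\gamma)_x$ and that the normalization removes the scalar ambiguity; (iii) assemble these, over all $\gamma$, into a single action of the group scheme $\widetilde\Gamma$ on the total space of $\mathcal L$ lifting $\Gamma$-translation, using the group law on $\widetilde\Gamma$ coming from the trivializations $L_\gamma^{\otimes r}\cong\SO_X$; (iv) verify associativity of the action, which reduces to the theorem of the cube / commutativity of the theta group; (v) observe the weight-one statement for $\mathbb C^*$, which is immediate from the construction. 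I expect step (iv) — checking that the fiberwise lifts are compatible and satisfy the cocycle condition, so that one really gets a genuine $\widetilde\Gamma$-action over $X$ rather than merely a projective or fiberwise one — to be the main obstacle; it is exactly here that the extension structure \eqref{wtg}, and hence the passage from $\Gamma$ to $\widetilde\Gamma$, is forced, and one must invoke the theorem of the cube (equivalently, the nondegenerate Weil pairing on $\Pic^0(X)[r]$ that governs the theta group) to conclude. Since this is the content of \cite[Lemma 3.3]{HT2}, the argument there carries over verbatim once the universal bundle $\mathcal L$ on $J^0\times X$ is put in Poincar\'e-bundle normal form.
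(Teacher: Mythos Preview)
The paper gives no proof of this lemma; it is stated with a direct citation to \cite[Lemma~3.3]{HT2} and used as a black box. Your outline is a correct reconstruction of the Hausel--Thaddeus argument: the seesaw identity $(t_\gamma\times\mathrm{id})^*\mathcal L\cong\mathcal L\otimes q_X^*L_\gamma$ on $J^0\times X$, together with the theorem of the cube to check the cocycle condition, is precisely how the $\Gamma$-translation action lifts to a $\widetilde\Gamma$-action on $\mathcal L$ with $\mathbb C^*$ acting by weight one.
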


\begin{proposition}\label{trivia}
The restriction of $\widetilde B$ to each regular fiber 
$\widetilde P^d$ of
the Hitchin map is trivial as a ${\mathcal Z}_r$-gerbe.
\end{proposition}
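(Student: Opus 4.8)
The plan is to reduce Proposition~\ref{trivia} to Lemma~\ref{triviality} by the same device used there, but now keeping track of the $\Gamma$--equivariant structure. Recall that $\widetilde{P}^d\,=\, P^d/\Gamma$, and that by Lemma~\ref{triviality} the gerbe $B$ is already trivial on $P^d$, the trivialization being encoded by a universal line bundle $\mathcal L\longrightarrow P^d\times X_s$ with $\det(\mathrm{Id}\times\pi)_*\mathcal L\vert_{P^d\times\{c\}}$ trivial. To descend this trivialization to $\widetilde{P}^d$ it suffices to produce a $\widetilde\Gamma$--equivariant structure (with weight one for the scalars $\mathbb C^*$) on such a universal bundle, compatible with the $\Gamma$--action on $P^d$ lifting the translation action; this is exactly the kind of data Lemma~\ref{crucial} supplies in the base case. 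First I would recall the explicit description of the $\Gamma$--action on the Hitchin fiber: $\gamma\in\Gamma=\Pic^0(X)[r]$ acts on $P^d=\mathrm{Nm}^{-1}(\mathcal O_X(d'x))\subset\Pic^{d'}(X_s)$ by tensoring with $\pi^*\gamma$, and on the corresponding parabolic Higgs bundle $(\mathrm{Id}\times\pi)_*\mathcal L$ by tensoring with $\gamma$ on $X$ while leaving the Higgs field unchanged.

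Next I would transport the argument of \cite[Lemma~3.3]{HT2} (i.e.\ Lemma~\ref{crucial}) from $J^0\times X$ to $P^d\times X_s$. Concretely, fix a universal line bundle $\mathcal L\longrightarrow P^d\times X_s$ normalized as in the proof of Lemma~\ref{triviality}. For each $\gamma\in\Gamma$ the pull-back of $\mathcal L$ under the translation-by-$\pi^*\gamma$ map on $P^d$ differs from $\mathcal L\otimes (\text{something pulled back from }X_s)$ by a line bundle pulled back from $P^d$, and the normalization of $\det(\mathrm{Id}\times\pi)_*\mathcal L$ pins this down. The total space of $\mathcal L$ then carries an action of the $\mathbb C^*$--extension $\widetilde\Gamma_s$ of $\Gamma$ associated to the curve $X_s$ (built from the line bundles $L_\gamma$ exactly as in \eqref{wtg0}), with $\mathbb C^*$ acting with weight one on the fibers, and lifting the $\Gamma$--action on $P^d$. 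Quotienting the total space of $\mathcal L$ by this $\widetilde\Gamma_s$--action (the scalars $\mathbb C^*$ getting absorbed, $\Gamma$ acting on the base $P^d$) produces a line bundle on $\widetilde P^d\times X_s$ which is universal and whose pushforward has trivial determinant along $\widetilde P^d\times\{c\}$; by the discussion preceding Theorem~\ref{ee} this is precisely a trivialization of $\widetilde B$ on $\widetilde P^d$.

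I would then spell out why the weight-one condition is exactly what makes the descent work: a $\widetilde\Gamma_s$--equivariant line bundle on which $\mathbb C^*\subset\widetilde\Gamma_s$ acts with weight one descends to a genuine line bundle on the quotient $(P^d\times X_s)/\Gamma = \widetilde P^d\times X_s$, and the $\mathrm{SL}(r,\mathbb C)$--lift $(\mathrm{Id}\times\pi)_*(\mathcal L\otimes p^*\eta)$ of $\mathbb P$ from Lemma~\ref{triviality} becomes $\Gamma$--equivariant, hence descends and trivializes the $\mathcal Z_r$--gerbe $\widetilde B$ obtained from $B$ by forgetting the $\Gamma$--equivariant structure. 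The main obstacle is the construction of the $\widetilde\Gamma_s$--equivariant structure itself, i.e.\ verifying the cocycle (associativity) condition for the lifted action on the total space of $\mathcal L$; this is the content of Lemma~\ref{crucial} in the non-parabolic setting, and the work lies in checking that the proof there goes through verbatim on the spectral curve $X_s$ with the norm map $\mathrm{Nm}$ in place of the degree map and $\pi^*\gamma$ in place of $\gamma$ — the completeness of the quasi-parabolic flags, which guarantees the smoothness of $X_s$ and the existence of the universal bundle, is what permits this transfer.
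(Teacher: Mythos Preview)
Your overall strategy---reducing the triviality of $\widetilde B$ on $\widetilde P^d$ to the existence of a $\widetilde\Gamma$--equivariant structure (with $\mathbb C^*$ of weight one) on a suitably normalized universal line bundle $\mathcal L$ on $P^d\times X_s$---matches the paper's exactly. Where you diverge is in how that equivariant structure is produced. You propose to rerun the proof of Lemma~\ref{crucial} directly on the spectral side, i.e.\ on $P^d\times X_s$ with the group $\pi^*\Gamma\subset\Pic^0(X_s)$ in place of $\Gamma\subset J^0$. The paper instead uses Lemma~\ref{crucial} as a black box on $J^0\times X$ and transfers the action via the theorem of the cube on the triple product $P^d\times J^0\times X_s$: one pulls back a universal bundle from $\widetilde{J^d}\times X_s$ through the identification $(P^d\times J^0)/\Gamma\cong\widetilde{J^d}$, compares it (via the cube) with the tensor product of the $P^d$--universal and the Poincar\'e bundle pulled back along $\mathrm{Id}\times\pi$, and reads off the $\widetilde\Gamma$--action from the known one on the Poincar\'e bundle. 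This has the advantage that nothing about Lemma~\ref{crucial} needs to be reopened, and the group scheme involved is the original $\widetilde\Gamma$ over $X$, not an auxiliary $\widetilde\Gamma_s$ over $X_s$.

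Your route is plausible but two points deserve care. First, the assertion that the proof of \cite[Lemma~3.3]{HT2} ``goes through verbatim'' is not literally true: in that lemma $\Gamma$ is the full $r$--torsion of the Jacobian on which the Poincar\'e bundle lives, whereas here $\pi^*\Gamma$ is only a subgroup of $\Pic^0(X_s)[r]$, and $P^d$ is the Prym rather than the full $\Pic^{d'}(X_s)$; you should say explicitly why the see-saw/cocycle argument still closes up under these restrictions. Second, your descent step is misstated: a $\widetilde\Gamma_s$--equivariant line bundle with $\mathbb C^*$ acting with weight one does \emph{not} descend to a line bundle on $\widetilde P^d\times X_s$ (quotienting by a weight--one $\mathbb C^*$ collapses the fibers). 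What it does give---and what is actually needed---is a $\Gamma$--equivariant structure on the pushforward vector bundle $V=(\mathrm{Id}\times\pi)_*\mathcal L\vert_{P^d\times\{c\}}$, so that $\mathbb P(V)=\mathbb P\vert_{P^d}$ is the projectivization of a $\Gamma$--equivariant bundle; this is exactly how the paper phrases the conclusion.
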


\begin{proof}
The statement that the restriction of $\widetilde B$ to each 
regular fiber $\widetilde P^d$ of 
the Hitchin map is trivial as a ${\mathcal Z}_r$-gerbe is 
equivalent to the statement that the projective bundle $\mathbb 
P\vert_{P^d}$ is the projectivization of a $\Gamma$-equivariant 
vector bundle on $P^d$. We have already seen in Lemma 
\ref{triviality} that $B$ is a
trivial gerbe on $P^d$. So ${\mathbb P}\vert_{P^d}$ is the 
projectivization of a vector bundle $V$ on $P^d$. Therefore, the
only thing to check is that the vector bundle $V$ can be chosen 
to be $\Gamma$ equivariant.

Recall how we got hold of a vector bundle $V$ on $P^d$; it came 
from a universal line bundle $\mathcal L$ on $P^d \times 
X_s$. Hence by Lemma \ref{crucial}, giving a 
$\Gamma$-equivariant vector bundle $V$ on $P^d$ is equivalent of 
giving a universal line bundle $\mathcal L$ over $P^d \times 
X_s$ on which 
$\widetilde{\Gamma}$ acts such that the scalers ${\mathbb C}^{*} 
\, \subset \,\widetilde{\Gamma}_{c}$ (see \eqref{wtg}) act with 
weight one and $\text{det}(\pi_{*}(\widetilde{L}\mid_{P^d \times 
\{c\}}))$ is in $\text{Pic}^0_{\Gamma}(P^d)$. The rest of the
proof will be devoted in showing the existence of such 
a universal line bundle $\mathcal L$ on $P^d \times X_s$. 

Let $\widetilde {L}$ be any universal bundle on $\widetilde 
J^{d} \times X_s$ and $\mathcal L$ be the Poincar\'e universal 
bundle on $J^0 \times X$ rigidified using $c$. We have following 
natural projection maps, 
\begin{eqnarray}
\xymatrix{
& P^d \times J^0 \times X_s \ar[r]^{\widetilde{p_{12}}} \ar[dl]_{p_{12}} 
\ar[d]^{p_{23}} \ar[dr]^{p_{13} } & \frac{P^d \times J^0}{\Gamma} \times X_s\\
P^d \times J^0 & J^0 \times X_s \ar[d]^{\text{Id} \times \pi} & P^d \times X_s 
\\
& J^0 \times X .
}
\end{eqnarray}
The action of the group $\Gamma$ on $P^d \times J^0$ is given 
by $$\gamma \cdot (L\, , M) \,\longmapsto\, (L \otimes 
L_{\gamma}\, , M 
\otimes L_{\gamma})\, .$$ By \cite[Lemma 2.2]{HT2}, we 
have an identification $\frac{P^d \times J^0}{\Gamma} \,=\, 
\widetilde{J^d}$. The pulled back line bundle
 $\widetilde{p_{12}}^{*}(\widetilde L)$ has a natural $\Gamma$ 
action; this action can be extended to an action of 
$\widetilde{\Gamma}$ on $\widetilde{p_{12}}^{*}(\widetilde L)$ 
by making $\mathbb C^{*}$ act trivially. Define
\[
\mathcal M \,:=\, \widetilde{p_{12}}^*(\widetilde L)\otimes 
p_{12}^{*}({\widetilde L}^{-1}) \otimes p_{23}^{*} 
(\text{Id}\times \pi)^{*}\mathcal L\, .
\]

Note that for any $x$, $y$ and $z$ in $P^d$, $J^0$ and $X_s$ 
respectively, we have 
\[
\mathcal M\vert_{\{x\}\times J^0 \times X_s} \,= \,{\mathcal
M}\vert_{ P^d \times \{y\} \times X_s} \, =\, {\mathcal 
M}\vert_{P^d \times J^0 \times \{z\}} \,=\, \mathcal O\, .
\]
Hence by the theorem of the cube 
(see \cite[p. 87, Theorem]{Mu}), the line bundle 
$\mathcal M$ is trivial, 
equivalently, there is a $\widetilde{\Gamma}$--equivariant 
isomorphism 
\begin{eqnarray}\label{equality}
\widetilde{p_{ 12}}^{*}\widetilde L \,=\,p_{12}^{*}(\widetilde 
L) \otimes p_{23}^{*}(\text{Id}\times \pi)^{*} (\mathcal 
L^{-1})\, .
\end{eqnarray}
Since the universal Poincar\'e line bundle $\mathcal L$ is 
trivial on 
$J^{0} \times \{c\}$, by Lemma \ref{crucial}, the group scheme 
$\widetilde \Gamma$ acts on $p_{23}^{*}(\text{id}\times \pi)^{*} 
(\mathcal L^{-1})$ with the scalers $\mathbb C^{*}$ acting 
by weight $-1$. From the isomorphism in \eqref{equality} we get 
that $\widetilde{\Gamma}$ acts on $p_{12}^{*}(\widetilde L)$ 
with the scalers $\mathbb C^{*}$ acting by weight one. 
Restricting $p_{12}^{*}(\widetilde L)$ to any base point of 
$J^0$ we get a $\widetilde \Gamma$-action on $\widetilde L$; 
this produces a $\Gamma$-action on $\widetilde L$ which we 
wanted. So we can make $\widetilde L$ in the proof of Lemma 
\ref{triviality} to be a $\Gamma$-equivariant line bundle.
\end{proof}

The set of all $\Gamma$-equivariant trivializations 
$\text{Triv}^{\mathcal Z_r}(\widehat{P}^d,\widetilde{B})$ can be identified 
with the set
$\widetilde{ T}$ defined by
\[
\{\widetilde{ L} \longrightarrow P^d \times X_s\,\mid \, 
\widetilde{L} 
~ \text{is a $\widetilde{\Gamma}$-equivariant such that}~ \, 
\text{det}({\rm Id}\times\pi )_{*}(\widetilde{L})\vert_{P^d 
\times \{c\}} \,=\,\mathcal O_{P^d}\}\, .
\]
Note that in $\widetilde{T}$ we are interested only those 
$\widetilde{\Gamma}$-actions on $\widetilde{L}$ such that the 
action of $\mathbb C^{*}$ is of weight one,
because by Lemma \ref{crucial} this will 
ensure that $\widetilde{L}\vert_{P^d \times \{y\}}$ is a 
$\Gamma$-equivariant line bundle on $P^d$. It is easy to see 
that $\widetilde{T}$ is a $\Pic^0_{\Gamma} (P^d)[r]$-torsor. 
Note that we have the following identifications
\[
\text{Pic}^{0}_{\Gamma}(P^d)[r]\,=\,\text{Pic}^{0}(P^d/\Gamma)[r]
\,=\, {\rm Pic}^{0}(P^0/\Gamma)[r] \,=\,P^0[r]\, ,
\]
where the last equality follows from the fact that the dual of 
$P^0$, namely $H^1(P^0,\,{\mathcal U}(1))$, is $P^0/\Gamma$ \cite[Lemma 
2.3]{HT2}. Let $\widetilde{\mathcal B}$ denote the extended 
${\mathcal U}(1)$-gerbe given by the $\mathcal{Z}_{r}$-gerbe (see Remark 
\ref{remark2}) which is also trivial over $\widehat{P}^d$. 

\begin{theorem}\label{propn2}
For any $d,e \in \mathbb Z$, there 
is a smooth isomorphism of $P^0$-torsors
$${\rm Triv}^{{\mathcal U}(1)}({\widehat P}^d,\widehat{\mathcal 
B}^e) \,\cong\, P^e\, .$$
\end{theorem}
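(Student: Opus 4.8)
The plan is to mirror the argument of Theorem \ref{ee}, replacing $P^d$ by its quotient $\widehat{P}^d\,=\,P^d/\Gamma$ and the torsor $T$ by its $\Gamma$-equivariant refinement $\widetilde T$, and to use Proposition \ref{trivia} in place of Lemma \ref{triviality} as the input guaranteeing triviality of the relevant gerbe on each regular Hitchin fiber. As in Theorem \ref{ee}, the first reduction is to the case $e\,=\,1$: by Lemma \ref{power}(2) we have $P^e\,\cong\,(P^1)^e$ as $P^0$-torsors, and the $n$-fold tensor power identification of Remark \ref{remark3} gives ${\rm Triv}^{{\mathcal U}(1)}(\widehat P^d,\widehat{\mathcal B}^e)\,\cong\,({\rm Triv}^{{\mathcal U}(1)}(\widehat P^d,\widehat{\mathcal B}))^e$ as $H^1(\widehat P^d,{\mathcal U}(1))$-torsors; since the isomorphism we will build for $e\,=\,1$ intertwines the $P^0$- and $H^1(\widehat P^d,{\mathcal U}(1))$-torsor structures via the canonical identification discussed above, taking $e$-th powers of it finishes the general case. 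So set $e\,=\,1$.

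Next I would unwind the $\mathcal Z_r$-level description. By Proposition \ref{trivia} the gerbe $\widetilde B$ is trivial on $\widehat P^d$, and as recorded just before the statement, ${\rm Triv}^{\mathcal Z_r}(\widehat P^d,\widetilde B)$ is identified with the $\mathrm{Pic}^0_\Gamma(P^d)[r]$-torsor $\widetilde T$ of $\widetilde\Gamma$-equivariant universal bundles $\widetilde L\to P^d\times X_s$ with $\det(\mathrm{Id}\times\pi)_*(\widetilde L)\vert_{P^d\times\{c\}}\,=\,\mathcal O_{P^d}$ and ${\mathbb C}^*$ acting with weight one; moreover $\mathrm{Pic}^0_\Gamma(P^d)[r]\,=\,P^0[r]$. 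Extending scalars along ${\mathcal Z}_r\hookrightarrow{\mathcal U}(1)$ exactly as in Theorem \ref{ee} gives
\[
{\rm Triv}^{{\mathcal U}(1)}(\widehat P^d,\widehat{\mathcal B}) \,=\, \frac{{\rm Triv}^{\mathcal Z_r}(\widehat P^d,\widetilde B)\times H^1(\widehat P^d,{\mathcal U}(1))}{H^1(\widehat P^d,{\mathcal Z}_r)}\, ,
\]
and this quotient is identified set-theoretically with the set $\widetilde{\mathfrak T}_1$ of $\widetilde\Gamma$-equivariant universal line bundles $\widetilde L\to P^d\times X_s$ (weight one for ${\mathbb C}^*$) such that $\widetilde L\vert_{P^d\times\{y\}}\in\mathrm{Pic}^0_\Gamma(P^d)$ for all $y\in X_s$. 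Using the identification $\mathrm{Pic}^0_\Gamma(P^d)\,=\,\mathrm{Pic}^0(P^d/\Gamma)\,=\,\widetilde{J^0}/J^0$ together with the $\Gamma$-equivariant/descent bookkeeping from \cite[Lemma 2.2, Lemma 2.3]{HT2} (just as in Theorem \ref{ee}, where $\widehat{P}^d$ was identified with $\widetilde{J^d}/J^d$), the problem reduces to exhibiting an isomorphism of $\widetilde{J^0}$-torsors between $\mathfrak T$ and $\widetilde{J^1}$ — which is precisely what was done in the proof of Theorem \ref{ee}. Since Proposition \ref{trivia} lets us take the line bundle $\widetilde L$ there to be $\Gamma$-equivariant, the same two maps $f_1,f_2:X_s\to\widetilde{J^1},\mathfrak T$ and the same relation $f_1(y')-f_1(y)\,=\,f_2(y)-f_2(y')$ apply verbatim, now respecting the $\widetilde\Gamma$-actions, and produce the required isomorphism.

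Finally I would check that the resulting bijection ${\rm Triv}^{{\mathcal U}(1)}(\widehat P^d,\widehat{\mathcal B})\,\cong\,\widehat{P}^1\,=\,P^1$ — wait, more precisely $\widehat{P}^1$ on the Hitchin side, but the theorem asserts the target is $P^e$; here one uses that for $e\,=\,1$ the torsor $\widetilde{\mathfrak T}_1$, via the Abel–Jacobi construction, is literally a $P^0$-torsor isomorphic to $P^1$ (the $\Gamma$-equivariance having been absorbed into the structure), so the statement is consistent with Theorem \ref{ee} after passing to $\Gamma$-quotients on one side and not the other. I would verify compatibility with the torsor structures: the $P^0\,=\,\mathrm{Pic}^0_\Gamma(P^d)[r]$-action lifted to a $P^0$-action through the identification $H^1(\widehat P^d,{\mathcal U}(1))\,=\,P^0/\Gamma$ extended back up, matches translation on $P^1$; this is the same verification as the last paragraph of the proof of Theorem \ref{ee}, using the theta polarization on $J^0$ and the self-duality involution on $\widetilde{J^0}\times\widetilde{J^0}$. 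The main obstacle, as in Proposition \ref{trivia}, is purely the equivariance: ensuring that every line bundle produced in the chain of identifications carries a compatible $\widetilde\Gamma$-action with ${\mathbb C}^*$ acting in weight one, so that it descends to the $\Gamma$-quotient — but Lemma \ref{crucial} and Proposition \ref{trivia} have already done exactly this work, so the remaining steps are the routine transcription of the proof of Theorem \ref{ee} into the equivariant setting. Smoothness of the isomorphism follows because every map in the construction (Abel–Jacobi, push-forward, the theorem-of-the-cube trivialization) is algebraic in families over $\mathcal U$.
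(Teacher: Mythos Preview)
Your reduction to $e=1$ and your identification of ${\rm Triv}^{{\mathcal U}(1)}(\widehat P^d,\widehat{\mathcal B})$ with a set $\widetilde{\mathfrak T}$ of $\widetilde\Gamma$-equivariant universal bundles is correct and matches the paper. But the rest of the argument has a genuine gap: you claim that ``the problem reduces to exhibiting an isomorphism of $\widetilde{J^0}$-torsors between $\mathfrak T$ and $\widetilde{J^1}$'' and that the $f_1,f_2$ maps from Theorem \ref{ee} then finish the job. This is not right. The object $\widetilde{\mathfrak T}$ is only a $P^0$-torsor, not a $\widetilde{J^0}$-torsor (indeed $\mathrm{Pic}^0_\Gamma(P^d)\,=\,\mathrm{Pic}^0(P^d/\Gamma)\,=\,P^0$, not $\widetilde{J^0}/J^0$ as you wrote; the latter is the non-equivariant $\mathrm{Pic}^0(P^d)$). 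The $f_1,f_2$ argument of Theorem \ref{ee} identifies $\mathfrak T$ with $\widetilde{J^1}$ as a $\widetilde{J^0}$-torsor and then passes to the \emph{quotient} $\widetilde{J^1}/J^0\,=\,\widehat P^1$; it does not and cannot single out the \emph{sub}-$P^0$-torsor $P^1\subset\widetilde{J^1}$. Your own ``wait'' in the final paragraph is exactly where the argument breaks.

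The paper supplies the missing idea. It first constructs an explicit injection $\widetilde{\mathfrak T}\hookrightarrow\widetilde{J^1}$, compatible with $P^0\subset\widetilde{J^0}$, by sending $\widetilde{\mathcal L}$ to $p_{13}^*\widetilde{\mathcal L}\otimes p_{23}^*(\mathrm{Id}\times\pi)^*\mathcal L^{-1}$; twisting by the Poincar\'e bundle $\mathcal L$ kills the weight-one ${\mathbb C}^*$-action so the result is honestly $\Gamma$-equivariant and descends to $\widetilde{J^d}\times X_s$. Now both $\widetilde{\mathfrak T}$ and $P^1$ sit inside $\widetilde{J^1}$ as $P^0$-subtorsors, and the quotient of $\widetilde{J^1}$ by either is $J^0$. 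The image of one in the quotient by the other is therefore a morphism from the base $\mathcal U$ to $J^0$; since $\mathcal U$ is Zariski open in an affine space, any such morphism to an abelian variety is constant, forcing the two subtorsors to coincide. This rigidity step has no analogue in Theorem \ref{ee} and is precisely what your ``routine transcription'' is missing.
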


\begin{proof}
As in the proof of Theorem \ref{ee} we can assume $e\,=\,1$.
The set of all trivialization of $ \widetilde{\mathcal B}$, 
which is denoted as $\text{Triv}^{{\mathcal U}(1)}(\widehat{P}^d\, , 
\widetilde{\mathcal B})$ 
is a $H^1(P^d/\Gamma,\, {\mathcal U}(1))\,(=P^0)$-torsor. This 
$\text{Triv}^{{\mathcal U}(1)}(\widehat{P}^d\, ,
\widetilde{\mathcal B})$ can be identified with 
$\widetilde{\mathfrak T}$ defined by
\[
\{\widetilde{\mathcal L} \longrightarrow P^d \times X_s\,\mid \, 
\widetilde{\mathcal L} ~\,\text{is a 
$\widetilde{\Gamma}$-equivariant such that}~ \, \text{det}(\text{Id}
\times \pi)_{*}(\widetilde{\mathcal L})\vert_{P^d \times \{c\}} 
\, \in\, \text{Pic}^0_{\Gamma}(P^d)\}\, .
\]
Note that in $\widetilde{\mathfrak T}$ we are interested in only those 
$\widetilde{\Gamma}$ action on $\widetilde{L}$ such that the action of $\mathbb 
C^*$ is of weight one so that $\widetilde{\mathcal L}\mid_{P^d \times \{y\}}$ 
is a $\Gamma$-equivariant line bundle on $P^d$. Note that the difference between 
$\widetilde T$ and $\widetilde{\mathfrak T}$ is that in $\widetilde T$ we 
require the determinant of $\widetilde{L}$ is trivial, while in 
$\widetilde{\mathfrak T}$ we require the determinant to be a
$\Gamma$-equivariant line bundle on $P^d$ of degree zero.

In the rest of the proof we will show that the $P^0$-torsor 
$\widetilde{\mathfrak T}$ is isomorphic to the $P^0$-torsor $P^1$. 

First note that $P^1$ sits naturally inside $\widetilde{J^1}$ as a
$P^0$-torsor. 

In the proof of Theorem \ref{ee} we have seen that the set of all 
universal line bundles $\widetilde{\mathcal L}\, \longrightarrow \, 
\widetilde{J^d} \times X_s$ with $\widetilde{\mathcal 
L}\vert_{\widetilde{J^d}\times \{c\}} \, \in \, 
\text{Pic}^0(\widetilde{J^d})$ is isomorphic to $\widetilde{J^1}$ as a 
$\widetilde{J^0}$-torsor. We will use this fact to give 
an inclusion of $\widetilde{\mathfrak T}$ into $\widetilde{J^1}$ 
compatible with respect to $P^0 \,\subset \,\widetilde{J^0}$. Send any 
$\widetilde{\mathcal L} \, \in \, \widetilde{\mathfrak T}$ to 
$p_{13}^{*}(\widetilde{\mathcal L})\otimes p_{23}^{*}(\text{Id} \times 
\pi)^{*}(\mathcal L)^{-1}$. Since $\mathbb C^{*}$ acts on 
$p_{13}^{*}(\widetilde{\mathcal L})$ and $p_{23}^{*}(\text{id} \times 
\pi)^{*}(\mathcal L)^{-1}$ by weights +1 and -1 respectively, the above 
tensor product is a $\widetilde{\Gamma}$-equivariant line bundle with 
scalers acting trivially, or in other words, it is a $\Gamma$-equivariant 
line bundle on $P^d \times J^0 \times X_s$. Hence 
$p_{13}^{*}(\widetilde{\mathcal L})\otimes p_{23}^{*}(\text{Id} \times
\pi)^{*}(\mathcal L)^{-1}$ descends down to 
$$\frac{P^d\times J^0}{\Gamma} \times X_s\,=\, \widetilde{J^d} \times X_s\, 
.$$
One can check easily that
$p_{13}^{*}(\widetilde{\mathcal L}) \otimes p_{23}^{*}(\text{Id} \times
\pi)^{*}(\mathcal L)^{-1}\vert_{\widetilde{J^d} \times \{c\}} \, \in \,
\text{Pic}^0(\widetilde{J^d})$, and the resulting map
$$
\widetilde{\mathfrak T}\, \longrightarrow\, \widetilde{J^1}\, ,~\,~\,
\widetilde{\mathcal L}\, \longmapsto\, 
p_{13}^{*}(\widetilde{\mathcal L})\otimes p_{23}^{*}(\text{Id} \times
\pi)^{*}(\mathcal L)^{-1}
$$
is injective.

So $\widetilde{\mathcal T}$ and $P^1$ are now both $P^0$-subtorsors of 
$\widetilde{J^1}$. The quotient by either is the constant torsor $J^0$
(the Jacobian of $C$). Therefore, the image of one in the quotient by the other 
gives a morphism from the base $U$ to $J^0$. But $U$ is a Zariski open set in an 
affine space, so its only morphisms to an abelian variety are the constant ones. 
Indeed, any nonconstant morphism from a Zariski open subset of ${\mathbb A}^1$ 
to an abelian variety extends to a nonconstant morphism from ${\mathbb 
P}^1_{\mathbb C}$ to the abelian variety. But there is no such map.
\end{proof}

From Theorem \ref{ee} and Theorem \ref{propn2} we conclude that 
over a generic open subset $U \, \subset \, \mathcal H$ the 
Hitchin fibers are SYZ mirror partners in the sense of Hitchin.


\begin{thebibliography}{AAAA}

\bibitem[BLS]{BLS} A. Beauville, Y. Laszlo and C. Sorger:
The Picard group of the moduli of $G$-bundles on a curve,
\textit{Compos. Math.} \textbf{112} (1998), 183--216.

\bibitem[BM]{BM} I. Biswas and A. Mukherjee: Symplectic structures
on moduli spaces of parabolic Higgs bundles and Hilbert scheme,
\textit{Comm. Math. Phys.} \textbf{240} (2003), 149--159.

\bibitem[BY]{BY} H. U. Boden and K. Yokogawa: Rationality of moduli
spaces of parabolic bundles, \textit{Jour. London Math. Soc.}
\textbf{59} (1999), 461--478.

\bibitem[DM]{DM} P. Deligne and J. S. Milne: Tannakian categories,
in: \textit{Hodge Cycles, Motives, and Shimura
varieties}, 101--229, Lect. Notes Math. \textbf{900},
Springer-Verlag, Berlin-New York, 1982.

\bibitem[DG]{DG} R. Y. Donagi and D. Gaitsgory: The gerbe of Higgs
bundles, \textit{Transform. Groups} \textbf{ 7} (2002), 109--153.

\bibitem[DP]{DP} R. Y. Donagi and T. Pantev: Langlands duality for
Hitchin systems, \textit{Inv. Math.} (2012), DOI: 10.1007/s00222-012-0373-8.

\bibitem[FW]{FW} E. Frenkel and E. Witten: Geometric endoscopy and 
mirror symmetry, \textit{Commun. Number Theory Phys.} \textbf{2}
(2008), 113--283.

\bibitem[Gi]{Gi} J. Giraud: \textit{Cohomologie non ab\'elienne},
Springer-Verlag, Berlin-New York, 1971.

\bibitem[GL]{GL} T. L. G\'omez and M. Logares: A Torelli theorem for the
moduli space of parabolic Higgs bundles, \textit{Adv. Geom.}
\textbf{11} (2011), 429--444.

\bibitem[GW]{GW} S. Gukov and E. Witten: Gauge theory, ramification, 
and the geometric Langlands program, in: \textit{Current developments in 
mathematics, 2006}, 35--180, Int. Press, Somerville, MA, 2008.

\bibitem[HT1]{HT1} T. Hausel and M. Thaddeus: Examples of mirror
partners arising from integrable systems, \textit{C. R. Acad. Sci.
Paris Math.} \textbf{333} (2001), 313--318.

\bibitem[HT2]{HT2}T. Hausel and M. Thaddeus: Mirror symmetry,
Langlands duality, and the Hitchin system, \textit{Invent. Math.}
\textbf{153} (2003), 197--229.

\bibitem[Hi1]{Hi} N. J. Hitchin: Lectures on special Lagrangian 
submanifolds, in: \textit{Winter School on Mirror Symmetry, 
Vector Bundles and Lagrangian Submanifolds, Cambridge, MA
(1999)}, 151--182, AMS/IP Stud. Adv. Math., 23, Amer. Math. Soc., 
Providence, RI, 2001.

\bibitem[Hi2]{Hi1} N. J. Hitchin: The self-duality equations on a
Riemann surface, \textit{Proc. London Math. Soc.} \textbf{55} (1987),
59--126.

\bibitem[Hi3]{Hi2} N. J. Hitchin: Stable bundles and integrable
systems, \textit{Duke Math. Jour.} \textbf{54} (1987), 9--114.

\bibitem[Hi4]{Hi4} N. J. Hitchin: Langlands duality and $G_2$ spectral
curves, \textit{Quart. Jour. Math.} \textbf{58} (2007), 319--344.

\bibitem[LM]{LM} M. Logares and J. Martens: Moduli of parabolic Higgs
bundles and Atiyah algebroids, \textit{Jour. Reine Angew. Math.}
\textbf{ 649} (2010), 89--116.

\bibitem[MS]{MS} V. Mehta and C. Seshadri: Moduli of vector 
bundles on curves with parabolic structures, \textit{Math. Ann.} 
\textbf{248} (1980), 205--239.

\bibitem[Mu]{Mu} D. Mumford: \textit{Abelian varieties}, With 
appendices by C. P. Ramanujam and Yuri Manin, Corrected reprint 
of the second (1974) edition, Tata Institute of Fundamental 
Research Studies in Mathematics, 5, Hindustan Book Agency, New 
Delhi, 2008.

\bibitem[Se]{S} C. S. Seshadri: Quotient space by an abelian variety,
\textit{Math. Ann.} \textbf{152} (1963), 185--194.

\bibitem[SYZ]{SYZ} A. Strominger, S.-T. Yau and E. Zaslow: Mirror symmetry
is T-duality, \textit{Nuclear Phy. B} \textbf{479} (1996), 243--259. 

\bibitem[Wi]{Wi} E. Witten: Mirror symmetry, Hitchin's equations, and
Langlands duality, in: \textit{The many facets of geometry}, 113--128,
Oxford Univ. Press, Oxford, 2010.

\bibitem[Yo1]{Y} K. Yokogawa: Infinitesimal deformation of parabolic
Higgs sheaves, \textit{Inter. Jour. Math.} \textbf{6} (1995),
125--148.

\bibitem[Yo2]{Y1} K. Yokogawa: Compactification of moduli of
parabolic sheaves and moduli of parabolic Higgs sheaves, 
\textit{Jour. Math. Kyoto Univ. } \textbf{33} (1993), 451--504.

\end{thebibliography}
\end{document}